\journal{Journal of \LaTeX\ Templates}
\theoremstyle{plain}
\newtheorem{theorem}{Theorem}
\newtheorem{corollary}{Corollary}
\newtheorem{lemma}{Lemma}
\newtheorem{proposition}{Proposition}
\theoremstyle{definition}
\newtheorem{remark}{Remark}
\begin{document}
	
	\begin{frontmatter}
		
		\title{On Belyi's Theorems in Positive  Characteristic}
		
		%\tnotetext[mytitlenote]{Fully documented templates are available in the elsarticle package on \href{http://www.ctan.org/tex-archive/macros/latex/contrib/elsarticle}{CTAN}.}
		
		%% Group authors per affiliation:
		
		\author{Nurdag\"ul Anbar}
		\address{Sabanc\i University, MDBF, Orhanlı, Tuzla, 34956, \.Istanbul, Turkey \\ Johannes Kepler University, Altenberger St. 69, 4040,
			Linz, Austria }
		
		%\fntext[myfootnote]{Since 1880.}
		
		\author{Seher Tutdere \corref{mycorrespondingauthor}}
		\cortext[mycorrespondingauthor]{Corresponding author}
		\ead{stutdere@gmail.com>}
		\address{Gebze Technical University, Department of Mathematics, 41400 Gebze, Kocaeli, Turkey}

		%\fntext[myfootnote3]{Since 1880.}	
		
		\begin{abstract}
		There are two types of Belyi's Theorem for curves defined over finite fields of characteristic $p$, namely the  Wild and the Tame $p$-Belyi Theorems. In this paper, we discuss them in the language of function fields. We provide a self-contained proof for the Wild $p$-Belyi Theorem for any prime $p$ and the Tame $2$-Belyi Theorem.
		
		\end{abstract}

		\begin{keyword} Belyi's Theorem, function field, finite field, tame and wild ramification, pseudo-tame
			\MSC[2010] 11R58, 11G20, 14H05
		\end{keyword}
		
	\end{frontmatter}
	
%	\linenumbers 

\section{Introduction}\label{introduction}

Let $\mathcal{X}$ be a connected, smooth, projective curve defined over the field of algebraic numbers $\bar{\mathbb{Q}}$. The main theorem of Belyi states that there exists a morphism
$f$ from $\mathcal{X}$ to the projective line  $\mathbb{P}^{1}$ such that the branch points of $f$ lie in the set $\{0,1,\infty\}$. The morphism $f$ satisfying this property is called a \textit{Belyi map} for $\mathcal{X}$. Belyi gave two
elementary proofs for his theorem, see \cite{bel1, bel2}.
%each of them consisting of two steps. He first reduces the
%ramification points over a set in $\mathbb{Q}$ by inductive construction, and then he reduces the ramification over
%$\{0,1,\infty\}$. %By using a new method, a single step reduction has been given in \cite{gold2}.
In fact, the converse of the statement also holds, and was known before Belyi's result \cite{weil}. In other words, $\mathcal{X}$ is a curve defined over $\bar{\mathbb{Q}}$ if and only if there exists a morphism $f:\mathcal{X} \to \mathbb{P}^{1}$ whose branch points of $f$ lie in the set $\{0,1,\infty\}$. The connection with different areas of mathematics, such as the arithmetic and modularity  of  elliptic curves, $\mathrm{ABC}$ conjecture and moduli spaces of pointed curves, makes Belyi's statement more interesting, for details see the excellent paper \cite{gold1} and references therein.

In this paper we investigate Belyi's Theorem in positive characteristic $p$. We denote by $\mathbb F_q$ the finite field with $q$ elements, where $q$ is a power of a prime $p$, and by $\bar{\mathbb F}_p$ the algebraic closure of $\mathbb F_q$. The dichotomy of wild and tame ramification in positive characteristic leads to two types of Belyi's Theorem as follows.

\begin{theorem}[Wild $p$-Belyi Theorem]\label{wild-curve}
	Let $\mathcal{X}$ be a connected, smooth, projective curve defined over $\mathbb F_q$. Then there exists a morphism $\phi: \mathcal{X} \to \mathbb{P}^{1}$ over $\mathbb F_q$ which  admits at most one branch point.
\end{theorem}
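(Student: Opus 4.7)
My plan is to work in the language of function fields and to reduce the branch locus of an arbitrary separable cover of $\PP^1$ to a single point in two composable stages. First, let $F = \F_q(\mathcal{X})$ and choose any $x \in F$ for which $F/\F_q(x)$ is finite and separable; such an $x$ exists because $\F_q$ is perfect. The branch locus of this extension is a finite, Galois-stable set of closed points of $\PP^1_{\F_q}$, and its finite part is contained in $\F_{q^n}$ for some $n \geq 1$. I would then set $y := x^{q^n}-x$. Since $y$ is an additive polynomial with derivative $-1$, the induced map $\PP^1 \to \PP^1$ sending $x \mapsto y$ is unramified at every finite point and totally ramified at $\infty$. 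Moreover $y$ vanishes precisely on $\F_{q^n}$, so every finite branch point of $F/\F_q(x)$ is sent to $y=0$, while $x=\infty$ is sent to $y=\infty$. Thus the branch locus of $F/\F_q(y)$ is contained in $\{0,\infty\}$.

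Next, to merge these two points, I would set $z := y^p + 1/y$, a rational function of $y$ of degree $p+1$ whose minimal polynomial over $\F_q(z)$ is $Y^{p+1} - zY + 1$. The critical computation is $dz/dy = p y^{p-1} - y^{-2} = -y^{-2}$ in characteristic $p$, which has no finite zeros; equivalently, the simultaneous system $Y^p = z$ and $Y^{p+1}-zY+1=0$ forces $1=0$, ruling out multiple roots over any finite $z$. Hence $\F_q(y)/\F_q(z)$ is ramified only above $z=\infty$. Since $z$ has poles at both $y=0$ (of order $1$) and $y=\infty$ (of order $p$), both points of $\{0,\infty\}$ lie above $z=\infty$. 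Combining with the previous step, $F/\F_q(z)$ has branch locus contained in the single point $\{\infty\}$, and viewing $\phi := z$ as an element of $F$ yields the desired morphism $\mathcal{X} \to \PP^1$ over $\F_q$ with at most one branch point.

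The hard part is finding the right collapsing map for the second stage: one needs a rational function on $\PP^1$ that sends two prescribed points to a common image without introducing any new ramification. In characteristic zero no such map exists, which matches the three-point floor in Belyi's theorem over $\bar{\mathbb{Q}}$. In positive characteristic, however, the vanishing of $p y^{p-1}$ makes the derivative of $y^p + 1/y$ as simple as $-y^{-2}$, and this algebraic coincidence is precisely what drives the reduction all the way down to a single branch point. Once the collapsing map is in hand, the rest of the argument is routine bookkeeping with branch loci under composition of maps.
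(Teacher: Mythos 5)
Your proof is correct, and it shares the paper's overall strategy (pass to the function field, compose rational maps in $x$ to shrink the branch locus, and use the degree-$(p+1)$ map $t\mapsto (t^{p+1}+1)/t$ as the collapsing gadget), but your first stage is genuinely different and in fact more economical. The paper's Lemma~\ref{main} uses the Kummer-type substitution $t=1-x^{q^r-1}$, which sends the finite nonzero branch points to $t=0$ but is itself totally (tamely) ramified at $x=0$ and $x=\infty$; the resulting branch locus $\{0,1,\infty\}$ then requires \emph{two} successive applications of the collapsing map ($u=(t^{p+1}+1)/t$ followed by $y=((u-2)^{p+1}+1)/(u-2)$). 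Your additive substitution $y=x^{q^n}-x$ has derivative $-1$, hence is unramified at every finite place and handles $x=0$ on the same footing as the other finite branch points, so the branch locus drops to $\{0,\infty\}$ in one step and a single application of $z=y^p+1/y$ finishes the argument. The trade-off is that the paper's tame first stage is reused verbatim in its proof of the Tame $p$-Belyi Theorem (where wild ramification is forbidden), whereas your Artin--Schreier-type map is wildly ramified at infinity and only works here; for the Wild theorem itself, though, your route is shorter and the verification that $Y^{p+1}-zY+1$ has no multiple roots is exactly the computation the paper performs.
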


\begin{theorem}[Tame $p$-Belyi Theorem]\label{tame-curve}
	Let $\mathcal{X}$ be a connected, smooth, projective curve defined over $\bar{\mathbb F}_p$. Then there exists a tamely ramified morphism $\phi: \mathcal{X} \to \mathbb{P}^{1}$ admitting  at most three branch points.
\end{theorem}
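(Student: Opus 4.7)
The plan is to adapt Belyi's two-stage strategy to positive characteristic: first eliminate all wild ramification, then cut the number of branch points down to three. As a starting point I would pick any non-constant separable morphism $\psi_0: \mathcal{X} \to \mathbb{P}^1$ over $\bar{\mathbb{F}}_p$, which exists because $\bar{\mathbb{F}}_p(\mathcal{X})$ is a finitely generated transcendence-degree-one extension of the perfect field $\bar{\mathbb{F}}_p$. I would write its finite branch locus $B_0 \subset \mathbb{P}^1(\bar{\mathbb{F}}_p)$ as a disjoint union of its tame part $B_0^t$ and its wild part $B_0^w$.

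The first and more delicate step is to modify $\psi_0$ into a tamely ramified morphism. Following the pseudo-tame philosophy flagged in the keyword list, for each $\alpha \in B_0^w$ the goal is to produce a rational function $h_\alpha \in \bar{\mathbb{F}}_p(t)$ such that $h_\alpha \circ \psi_0$ still ramifies above $h_\alpha(\alpha)$ but with the wild part of the local extension absorbed, while no new wild branch point is created elsewhere. The local choice of $h_\alpha$ is guided by the Artin-Schreier--Kummer structure of the completion above $\alpha$: if the local ramification index factors as $e = p^a e'$ with $\gcd(e',p) = 1$, then $h_\alpha$ must be chosen so that the $p^a$-part is neutralised, with the different exponents monitored via Riemann-Hurwitz. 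Iterating this pseudo-taming step, and arguing that a suitable invariant (for example, the total wild contribution to the different) strictly decreases at each stage, yields after finitely many iterations a tame morphism $\psi: \mathcal{X} \to \mathbb{P}^1$ with some tame branch locus $B$.

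Once $\psi$ is tame, I would reduce $|B|$ to at most three by the classical Belyi polynomial trick, suitably restricted to preserve tameness. If $|B| \geq 4$, send three points of $B$ to $\{0, 1, \infty\}$ by an automorphism of $\mathbb{P}^1$; for a further $\alpha \in B$, write $\alpha = m/(m+n)$ with positive integers $m, n$ coprime to $p$ and compose $\psi$ with the Belyi polynomial $g(t) = c \cdot t^m(1-t)^n$. Provided the ramification indices of $g$, namely $m$, $n$, $m+n$, and the local multiplicity at the critical point $m/(m+n)$, are all coprime to $p$, the composition $g \circ \psi$ remains tame and its branch locus is strictly smaller than $B$. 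A finite iteration then achieves $|B| \leq 3$.

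The main obstacle is the pseudo-taming step, and this is precisely why the paper proves the tame theorem below only for $p = 2$. In characteristic two, Artin-Schreier wild ramification is maximally awkward: the naive substitution $t \mapsto t^2 + t$ tames a chosen wild place but typically introduces fresh wild ramification over $\infty$, and the classical Belyi polynomial has a critical point of ramification index $2$, which is itself wild. Both difficulties must be addressed by more delicate local constructions, and establishing a monotone decrease of the controlling invariant under these constructions is where the real work of the proof is concentrated.
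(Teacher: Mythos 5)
Your proposal has two genuine gaps, one in each stage.

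\textbf{Stage 1 (taming) cannot work by post-composition.} You propose to absorb the wild part of the ramification of $\psi_0$ by composing with rational functions $h_\alpha\in\bar{\mathbb F}_p(t)$ on the target. But ramification indices are multiplicative in towers: if $x\in\mathcal X$ has $e(\psi_0,x)=p^ae'$ with $a\geq 1$, then $e(h_\alpha\circ\psi_0,x)=e(\psi_0,x)\cdot e(h_\alpha,\psi_0(x))$ is still divisible by $p$. No downstream composition can neutralise wild ramification that is already present; there is no ``monotone decreasing invariant'' to be found along this route. The correct move --- and the entire content of the hard part of the paper --- is to replace $\psi_0$ by a \emph{new} rational function on $\mathcal X$ itself. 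For $p>2$ this is Fulton's theorem (there is always an $x\in F$ with all ramification indices $1$ or $2$, hence tame); for $p=2$ it is the Sugiyama--Yasuda pseudo-tame machinery, where one starts from a function $x$ with a single high-order pole and repeatedly modifies it by adding fourth powers $z^4$ (which leaves $dx$ unchanged), using Tsen's theorem and the cocycle $a(x,y)$ to glue local corrections into a single pseudo-tame element, and finally upgrades pseudo-tame to tame. None of this is visible in your sketch, and the iteration you describe has no mechanism for terminating because the quantity you want to decrease never decreases.

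\textbf{Stage 2 (reducing to three branch points) does not transfer from characteristic zero.} The Belyi polynomial $c\,t^m(1-t)^n$ requires the branch point to be of the form $m/(m+n)$, i.e.\ to lie in the prime field $\mathbb F_p$ (and even then $m+n$ must be prime to $p$); a general branch point in $\bar{\mathbb F}_p$ is not of this form, and the preliminary step of Belyi's argument (composing with minimal polynomials to force rationality) introduces uncontrolled, typically wild, ramification. Moreover, as you note yourself, the interior critical point of $t^m(1-t)^n$ has multiplicity $2$, which is wild precisely when $p=2$. The paper avoids all of this with a single uniform construction: after the tame map $F/\bar{\mathbb F}_p(x)$ is in hand, choose $r$ so that every finite branch point $\alpha_i$ satisfies $\alpha_i^{q^r-1}=1$ and set $t=1-x^{q^r-1}$. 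Then every $(x=\alpha_i)$ lies \emph{unramified} over $(t=0)$, while $(x=0)$ and $(x=\infty)$ are totally ramified over $(t=1)$ and $(t=\infty)$ with index $q^r-1$, which is prime to $p$, so tameness is preserved and the branch locus collapses to $\{0,1,\infty\}$ in one step. This replacement of the iterated Belyi polynomial by a single Kummer-type map is the standard positive-characteristic trick, and your proposal needs it (or something equivalent) to get off the ground.
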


\noindent To the best of our knowledge, a first proof of Theorem \ref{tame-curve} for odd characteristic  is  given in \cite{said}. Moreover, in \cite{gold1}, the proofs of Theorem \ref{wild-curve} for any positive characteristic and Theorem \ref{tame-curve} for odd characteristic are given by using the results of \cite{katz, zap} and \cite{fulton}, respectively. The conjecture of the Tame $p$-Belyi Theorem for even characteristic has been recently proved in \cite{sy}.

It is a well-known fact that the theory of algebraic curves and the theory of algebraic function fields are equivalent \cite{har,nx2}. As a consequence of this equivalence, we here discuss Belyi's theorems in positive characteristic in the language of function fields. In fact, this significantly simplifies the proof of the Tame $2$-Belyi Theorem given in \cite{sy}.

The paper is organized as follows. In Section \ref{pre} we  fix  notations and give some basic facts regarding function fields. In Section \ref{sec:wild} we give a self-contained proof for the Wild $p$-Belyi Theorem. In Section \ref{sec:tame} we discuss the Tame $p$-Belyi Theorem. In particular, for $p>2$ we give the Tame $p$-Belyi Theorem by using the result \cite{fulton} and give a self contained proof of the Tame $2$-Belyi Theorem.  

\section{Preliminaries}\label{pre}

For the notations and  well-known facts, as a general reference, we refer to \cite{hkt,sti}.
Let $F$ be a function field over $\mathbb{F}$, where $\mathbb{F}=\mathbb F_q$ or $\mathbb{F}=\bar{\mathbb F}_p$, and let ${F}^{\prime}/F$ be a finite separable extension of function fields. We write ${P}^{\prime}|P$ for a place ${P}^{\prime}$ of $ F^{\prime}$ lying over a place $P$ of $F$, and denote by $e(P^{\prime}| P)$ the ramification index of ${P}^{\prime}|P$.
Recall that when the ramification index   $e(P^{\prime}| P)>1$,  it is said that $P^{\prime}|P$ is ramified. Moreover,  if  the characteristic $p$ of $\mathbb{F}$ does not divide  $e(P^{\prime}| P)$, then it is called \emph{tamely} ramified; otherwise it is called \emph{wildly} ramified. We call $F^{\prime }/F$ a tame extension if there is no wild ramification. For a rational function field $\mathbb{F}(y)$ and $\alpha\in \mathbb{F}$, we denote by $(y=\alpha)$ and $(y=\infty)$ the places corresponding to the zero and the pole of $y-\alpha$, respectively. 

We can state Belyi's theorems given in Theorems \ref{wild-curve} and \ref{tame-curve} in the language of function fields as  follows.

\begin{theorem}[Wild $p$-Belyi Theorem]\label{wild:ff}
	Let $F$ be a function field over $\mathbb F_q$. Then there exists a rational subfield $\mathbb F_q(y)$ of $F$ such that there exists at most one ramified place of $\mathbb F_q(y)$, namely $(y=\infty)$, in $F/\mathbb F_q(y)$.
\end{theorem}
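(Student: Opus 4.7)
The plan is to find, for some separating element $x \in F$, a rational function $y \in \mathbb F_q(x)$ such that in the tower $F/\mathbb F_q(x)/\mathbb F_q(y)$ two conditions hold: (i) the extension $\mathbb F_q(x)/\mathbb F_q(y)$ is ramified only above $(y=\infty)$, and (ii) every ramified place of $F/\mathbb F_q(x)$ lies above $(y=\infty)$. By multiplicativity of ramification indices in towers, this will force $(y=\infty)$ to be the only ramified place of $\mathbb F_q(y)$ in $F$.

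First I would pick any separating element $x \in F$, which is available because $\mathbb F_q$ is perfect. Let $B$ be the finite set of places of $\mathbb F_q(x)$ that ramify in $F$, and let $\tilde T \subseteq \bar{\mathbb F}_q$ be the geometric representatives of its finite closed points. Since $\tilde T \subseteq \mathbb F_{q^m}$ for some $m$, the additive polynomial
\[
L(X) := X^{q^m} - X \;\in\; \mathbb F_p[X]
\]
vanishes on $\tilde T$ and satisfies $L'(X) = -1$.

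The key step is then to set
\[
y := x^{p} + \frac{1}{L(x)} \;\in\; \mathbb F_q(x).
\]
Since $(x^p)' = 0$ in characteristic $p$, a direct computation yields $dy/dx = 1/L(x)^2$, which is nowhere zero on $\mathbb A^1$. Hence $y$ is a separating element of $\mathbb F_q(x)$ and the morphism $\mathbb P^1 \to \mathbb P^1$ induced by $y$ has its branch locus contained in $\{(y=\infty)\}$. At the same time, $y$ has a pole at every zero of $L$ (i.e.\ every point of $\tilde T$) and at $x=\infty$, so every place of $B$ lies above $(y=\infty)$. Combining this with the tower observation from the first paragraph completes the proof.

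The main obstacle is to produce a single formula that simultaneously (a) sends $B$ to $\infty$, (b) introduces no new finite branch point, and (c) remains separable. The naive attempt $y = 1/b(x)$, where $b$ is the product of the minimal polynomials of the finite points of $B$, satisfies (a) and (c) but typically violates (b), because $b'$ has roots outside the zero locus of $b$. Replacing $b$ by the additive separable polynomial $L$ neutralizes this: $L' = -1$ is a nonzero constant, so the contribution of the $1/L$ term to $dy/dx$ is the nowhere-zero $-L'/L^2 = 1/L^2$; the summand $x^p$ is ``derivative-silent'' in characteristic $p$ and yet supplies the needed pole at $x=\infty$, so that $(x=\infty) \in B$ also maps to $(y=\infty)$ without disturbing $dy/dx$.
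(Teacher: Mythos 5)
Your proof is correct, but it takes a genuinely different route from the paper. The paper proceeds in three stages: it first applies the tame map $t=1-x^{q^r-1}$ (Lemma~\ref{main}) to push all ramified places of $F/\mathbb F_q(x)$ over $\{(t=0),(t=1),(t=\infty)\}$, and then composes two degree-$(p+1)$ maps $u=\frac{t^{p+1}+1}{t}$ and $y=\frac{(u-2)^{p+1}+1}{u-2}$, each of which merges two of the remaining bad points into $\infty$ while introducing a single (wild) branch point at $\infty$. You instead do everything in one step with the additive-polynomial trick $y=x^p+1/L(x)$, $L(X)=X^{q^m}-X$: since $L'=-1$ and $(x^p)'=0$, one gets $dy/dx=1/L(x)^2$, which never vanishes, so by the ramification criterion of Subsection~\ref{rational} no finite place of $\mathbb F_q(x)$ outside the pole set of $y$ ramifies in $\mathbb F_q(x)/\mathbb F_q(y)$; meanwhile every place in the ramification locus of $F/\mathbb F_q(x)$ (including $(x=0)$ and $(x=\infty)$, which the paper must treat separately) is a pole of $y$ and hence lies over $(y=\infty)$, and multiplicativity of ramification indices in the tower finishes the argument. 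Your construction is the Katz--Abhyankar-style argument alluded to in \cite{gold1,katz}; it yields a map of degree $p+q^m$ rather than the paper's composite of degree $(q^r-1)(p+1)^2$, at the price of a larger wild fibre over $\infty$ (ramification index $p$ at $(x=\infty)$ with $q^m$ unramified companions). Two small points worth making explicit: the nonvanishing of $dy/dx$ only controls ramification at finite places that are not poles of $y$, so you should say (as you implicitly do) that the poles of $y$ need no further analysis because they already sit over $(y=\infty)$; and you should note that $g(T)=T^pL(T)+1$ and $h(T)=L(T)$ are coprime with $\deg g>\deg h$, so the poles of $y$ are exactly $(x=\infty)$ and the zeros of $L$, matching the setup of Subsection~\ref{rational}.
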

\begin{theorem}[Tame $p$-Belyi Theorem]\label{tame:ff}
	Let $F$ be a function field over $\bar{\mathbb{F}}_p$. Then there exists a rational subfield $\bar{\mathbb F}_p(y)$ of $F$ such that $F/\bar{\mathbb F}_p(y)$ is a tame extension, and there exist at most three ramified places of $\bar{\mathbb F}_p(y)$ in $F/\bar{\mathbb F}_p(y)$ lying in the set $\{(y=0), (y=1), (y=\infty)\}$.
\end{theorem}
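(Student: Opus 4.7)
The plan is to proceed in two stages: first, exhibit \emph{some} rational subfield $\bar{\mathbb{F}}_p(x)\subset F$ over which $F$ is tamely ramified; second, iteratively compose $x$ with well-chosen rational functions in order to shrink the set of ramified places of the base to at most three, then move them to $(y=0)$, $(y=1)$, $(y=\infty)$ by a final M\"obius transformation. Throughout, one has to be vigilant that no step introduces new wild ramification in $F$ over the updated base.

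For the first stage, when $p>2$ the existence of a tame cover $F/\bar{\mathbb{F}}_p(x)$ is provided directly by \cite{fulton}, so there is nothing to prove. When $p=2$ no such result is available off the shelf, and this is where a significant part of the technical work lies: starting from any separating element $z\in F$, one removes the wild ramification of $F/\bar{\mathbb{F}}_2(z)$ place by place, replacing $z$ at each step by a modified element via the notion of a \emph{pseudo-tame} map, in line with the paper's keywords and with the strategy of \cite{sy}.

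For the second stage, let $S\subset\mathbb{P}^{1}(\bar{\mathbb{F}}_p)$ denote the branch locus of $F/\bar{\mathbb{F}}_p(x)$, and induct on $|S|$. If $|S|\leq 3$, a M\"obius transformation $y=\mu(x)$ carries $S$ into $\{0,1,\infty\}$ and preserves tameness since $\bar{\mathbb{F}}_p(x)=\bar{\mathbb{F}}_p(y)$. Otherwise, arrange that $0,1,\infty\in S$ and pick $\alpha\in S\setminus\{0,1,\infty\}$, writing $\alpha=m/(m+n)$ for suitable positive integers $m,n$. Setting
\[
y=c\,x^{m}(1-x)^{n}
\]
with $c\in\bar{\mathbb{F}}_p^{\times}$ chosen so that $\alpha\mapsto 1$, the derivative $x^{m-1}(1-x)^{n-1}\bigl(m-(m+n)x\bigr)$ shows that $\bar{\mathbb{F}}_p(x)/\bar{\mathbb{F}}_p(y)$ is ramified only over $y\in\{0,1,\infty\}$, with ramification indices $m$, $n$, $2$, $m+n$ at $x=0$, $1$, $\alpha$, $\infty$ respectively. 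Composition with $F/\bar{\mathbb{F}}_p(x)$ then yields a cover $F/\bar{\mathbb{F}}_p(y)$ whose branch locus is contained in $(S\setminus\{\alpha\})\cup\{0,1,\infty\}$, strictly smaller than $S$, and the induction closes.

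The principal obstacle is preserving tameness under the last step. For $p>2$, the condition $\gcd(p,mn(m+n))=1$ can always be secured by a preparatory M\"obius transformation adjusting the representation $\alpha=m/(m+n)$, and the index-$2$ ramification at $\alpha$ is automatically tame, so the inductive reduction goes through. For $p=2$, however, the index-$2$ ramification at $\alpha$ is wild, so $x^{m}(1-x)^{n}$ is inadmissible: a substitute family of rational functions must be designed to continue the reduction tamely, and one must also produce the initial tame cover in the first stage without appeal to \cite{fulton}. Overcoming these characteristic-$2$ subtleties is the genuinely new content of the Tame $2$-Belyi Theorem.
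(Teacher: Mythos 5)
Your stage 2 has a genuine gap: you write an arbitrary branch point $\alpha\in S\setminus\{0,1,\infty\}$ as $\alpha=m/(m+n)$ for positive integers $m,n$, but an element of $\bar{\mathbb{F}}_p$ is a ratio of integers only if it lies in the prime field $\mathbb{F}_p$. A generic branch point lives in some $\mathbb{F}_{p^k}$ with $k>1$ and admits no such representation, so the map $y=cx^m(1-x)^n$ is simply unavailable. In the classical Belyi argument this is handled by a preliminary reduction to rational branch points via composition with minimal polynomials, but that step does not transfer to characteristic $p$: composing with a polynomial $g$ of degree divisible by $p$, or whose derivative has repeated roots, introduces wild ramification, and there is no analogue of the ``degree of the algebraic number drops'' induction. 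You would need to address this before the $x^m(1-x)^n$ reduction can even start. (A smaller slip: the new branch locus is $\varphi(S)\cup\{0,1,\infty\}$ for $\varphi(x)=cx^m(1-x)^n$, not $(S\setminus\{\alpha\})\cup\{0,1,\infty\}$; the cardinality still drops, but the set is not the one you name.) In addition, for $p=2$ you correctly observe that both the initial tame cover and the index-$2$ step fail, but you leave both unresolved, so the even-characteristic case of the theorem is not proved at all in your proposal.

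The paper avoids the whole iterative reduction by a device special to positive characteristic. Since $\bar{\mathbb{F}}_p$ is algebraically closed, every finite branch point of the tame cover $F/\bar{\mathbb{F}}_p(x)$ is of the form $x=\alpha_i$ with $\alpha_i\in\mathbb{F}_{q^r}^{\times}$ for a single suitable $r$ (after a M\"obius change moving branch points off $0$ and $\infty$ if necessary). The single substitution $t=1-x^{q^r-1}$ is then a degree $q^r-1$ map, tame because $\gcd(q^r-1,p)=1$, totally ramified exactly at $x=0$ (over $t=1$) and $x=\infty$ (over $t=\infty$), unramified elsewhere, and it sends \emph{every} element of $\mathbb{F}_{q^r}^{\times}$ to $t=0$. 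Thus all finite branch points collapse to $(t=0)$ in one step, and the composite $F/\bar{\mathbb{F}}_p(t)$ is tame with branch locus inside $\{0,1,\infty\}$. This is Lemma~\ref{main} in the paper; it replaces your entire induction, works uniformly for all $p$ including $p=2$ (once a tame cover exists), and sidesteps the rationality issue entirely. The remaining content of the paper is precisely the $p=2$ construction of a tame rational subfield (Proposition~\ref{pro=2}), carried out via pseudo-tame elements; that part you identified correctly as the crux but did not supply.
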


For the convenience of reader, we  now fix some notations. We denote by
\begin{itemize}
	\item[] $\mathbb{P}_F$ \hspace{2.25cm} the set of all places of $F/\mathbb{F}$,
	\item[] $[F^{\prime}:F]$ \hspace{1.6cm} the extension degree of $F^{\prime}/F$,
	\item[] $f(P^{\prime}| P)$ \hspace{1.5cm} the relative degree of $P^{\prime}| P$,
	\item[] $d(P^{\prime}| P)$ \hspace{1.5cm} the different exponent of $P^{\prime}|P$,
	\item[] $v_{P}$ \hspace{2.3cm} the valuation  of $F$ associated to the place $P$,
	\item[] $(z)_{\infty}$ (resp., $(z)_0$) \hspace{0.1cm} the pole divisor (resp., the zero divisor) of a nonzero element $z \in F$,
	%\item[] $\mathrm{Con}_{F^{\prime}/F}(P)$ \hspace{0.9cm} the conorm of $P\in \mathbb{P}_F$ with respect to $F^{\prime}/F$,
	%\item[] $\Omega_F$ the	space of differentials of $F$.
	\item[] $\mathcal  L (A)$ \hspace{2cm}  the Riemann-Roch space associated to a divisor $A$,
	\item[] $\ell(A)$ \hspace{2.1cm} the $\mathbb F$-dimension of $\mathcal L(A)$,
	\item[] $\mathrm{supp}(A)$ \hspace{1.5cm} the support of $A$, i.e., the set of places $P\in \mathbb{P}_F$ for which $v_P(A)\neq 0$.
	%\item[] $\Gamma=\mathrm{PGL}_2(F^4)$ \hspace{0.5cm} the projective general linear group over $F^4$.
	%\item[] $\mathcal{H}=F\setminus F^2$ \hspace{1.0cm} the set of separable elements of $F$.
\end{itemize}

Dedekind's Different Theorem \cite[Theorem 3.5.1]{sti} states that $d(P^{\prime}| P)\geq e(P^{\prime}| P)-1$, and the equality holds if and only if $P^{\prime}|P$ is tame. Furthermore, $P^{\prime}| P$ is ramified if and only if $d(P^{\prime}| P)>0$. By the Fundamental Equality \cite[Theorem 3.1.11]{sti}, we have $\sum{e(P^{\prime}| P)f(P^{\prime}|P)}=[F^{\prime}:F]$, where $P^{\prime}$ ranges over the places of $F^{\prime}$ lying over $P$.

One of the main tools in our proof of the Tame $2$-Belyi Theorem is the Strong Approximation Theorem \cite[Theorem 1.6.5]{sti}, and hence we state it for the sake of the reader.

\begin{lemma}\label{sat}
	Let $S\subset \mathbb{P}_F$ be a proper subset, and $P_1, \ldots P_r \in S$. For given $x_1, \ldots x_r \in F$ and $n_1, \ldots , n_r \in \mathbb{Z}$, there exists $x\in F$ such that 
	\begin{align*}
	v_{P_i}(x-x_i)=n_i \;\text{ for } i=1, \ldots r, \quad \text{and} \quad v_P(x)\geq 0 \; \text{ for all } P\in S\setminus \lbrace P_1, \ldots P_r  \rbrace \ .
	\end{align*}
\end{lemma}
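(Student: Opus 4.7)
The plan is to reduce the exact-valuation requirements to inequality conditions by a simple local shift, and then to produce the desired element globally by combining the Weak Approximation Theorem (for finitely many places) with a Riemann--Roch argument that exploits the hypothesis $S \subsetneq \mathbb{P}_F$. For the reduction, let $t_i$ be a local uniformizer at $P_i$ and set $y_i := x_i + t_i^{n_i}$, so that $v_{P_i}(y_i - x_i) = n_i$ exactly. Any $x \in F$ with $v_{P_i}(x - y_i) \geq n_i + 1$ for every $i$ then satisfies $v_{P_i}(x - x_i) = n_i$ by the strict triangle inequality, so it suffices to construct $x$ meeting these inequalities together with the integrality condition $v_P(x) \geq 0$ for all $P \in S \setminus \{P_1, \ldots, P_r\}$.

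For the global construction, first apply the Weak Approximation Theorem to the finite set $\{P_1, \ldots, P_r\}$ to obtain $z \in F$ with $v_{P_i}(z - y_i) \geq n_i + 1$ for each $i$. Such a $z$ may carry extraneous poles at finitely many places $R_1, \ldots, R_s \in S \setminus \{P_1, \ldots, P_r\}$. Since $S$ is proper, I would pick a place $Q \in \mathbb{P}_F \setminus S$ to serve as a dumping ground for uncontrolled poles, and form the divisor $D := MQ + \sum_{j=1}^{s}(-v_{R_j}(z)) R_j - \sum_{i=1}^{r}(n_i+1) P_i$ with $M$ chosen so large that $\deg D > 2g - 2$. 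In this regime Riemann--Roch provides enough functions in $\mathcal{L}(D)$ that every collection of principal parts at the $R_j$ compatible with the pole orders prescribed by $D$ is realized by some element of $\mathcal{L}(D)$. Hence there exists $h \in \mathcal{L}(D)$ whose principal parts at $R_1, \ldots, R_s$ coincide with those of $z$. Setting $x := z - h$ yields an element that satisfies $v_{P_i}(x - y_i) \geq n_i + 1$ (since both $z - y_i$ and $h$ vanish to at least that order at $P_i$), is regular at each $R_j$ by construction, and is regular at every other place of $S$ because $\mathrm{supp}(h) \subseteq \{Q, R_1, \ldots, R_s, P_1, \ldots, P_r\}$.

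The main obstacle I expect is that $S$ may be infinite, so the Weak Approximation Theorem alone, which only controls finitely many prescribed places, cannot prevent the approximating element from acquiring poles at the other infinitely many places of $S$. Overcoming this is precisely where the hypothesis $S \subsetneq \mathbb{P}_F$ enters, through the auxiliary place $Q \notin S$: because $h$ is allowed unrestricted pole orders at $Q$, Riemann--Roch furnishes enough functions in $\mathcal{L}(D)$ to cancel every principal part of $z$ at places of $S$, and the resulting $x = z - h$ stays regular throughout $S \setminus \{P_1, \ldots, P_r\}$. Without such a free place available outside $S$, there would be no slack in the divisor to absorb these corrections, and the construction would fail.
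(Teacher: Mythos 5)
The paper does not actually prove this lemma: it is quoted as \cite[Theorem 1.6.5]{sti}, so there is no in-paper argument to compare against. Your proof is correct and is essentially the standard textbook argument (Stichtenoth's adelic proof unwound into an explicit Riemann--Roch computation): the shift $y_i=x_i+t_i^{n_i}$ converting the exact valuation conditions into inequalities via the strict triangle inequality, the choice of an auxiliary place $Q\notin S$ made possible by $S\subsetneq\mathbb{P}_F$, and the surjectivity of the principal-part map on $\mathcal{L}(D)$ are all the standard ingredients. One small imprecision: surjectivity of $\mathcal{L}(D)\to\bigoplus_{j}\mathcal{O}_{R_j}t_j^{-m_j}/\mathcal{O}_{R_j}$ (with $m_j=-v_{R_j}(z)$) follows from $\ell(D)-\ell(D')=\deg D-\deg D'$ for $D'=D-\sum_j m_jR_j=MQ-\sum_i(n_i+1)P_i$, and by Riemann--Roch this equality requires the \emph{smaller} divisor $D'$ to be nonspecial; so the condition you should impose is $\deg D'>2g-2$, not merely $\deg D>2g-2$. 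Since $M$ is a free parameter this is harmless --- just choose $M$ large enough that $M-\sum_i(n_i+1)\deg P_i>2g-2$ --- but as written the stated degree condition does not by itself justify the surjectivity claim.
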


\begin{corollary}\label{rem:sat}
	Let $D=\sum n_iP_i$, $n_i\geq 0$, be a positive divisor. Then the Strong Approximation Theorem implies the existence of $x\in F$ with $D\leq (x)_0$ and $(x)_{\infty}=nP$ for some place $P\not \in \mathrm{supp}(D)$ and $n\in \mathbb{N}$. 
\end{corollary}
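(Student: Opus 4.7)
The plan is to derive the corollary from the Strong Approximation Theorem (Lemma \ref{sat}) by choosing $S$ and the prescribed data with some care. Write $\mathrm{supp}(D) = \{P_1,\ldots,P_r\}$ with $D = \sum_{i=1}^r n_i P_i$ and $n_i \geq 1$ (zero coefficients may be dropped without loss of generality). Since every function field has infinitely many places, I would first pick a place $P \in \mathbb{P}_F \setminus \mathrm{supp}(D)$. Then take $S := \mathbb{P}_F \setminus \{P\}$, which is a proper subset of $\mathbb{P}_F$ and contains all of $P_1,\ldots,P_r$.

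Next, I would apply Lemma \ref{sat} to this $S$ with the places $P_1, \ldots, P_r$, the elements $x_1 = \cdots = x_r = 0$, and the integers $n_1,\ldots,n_r$ taken from the divisor $D$. This produces $x \in F$ satisfying
\begin{align*}
v_{P_i}(x) = n_i \quad (i=1,\ldots,r), \qquad v_Q(x) \geq 0 \ \text{ for all } Q \in S \setminus \{P_1,\ldots,P_r\}.
\end{align*}
These two conditions together say that $v_Q(x) \geq 0$ for every place $Q \neq P$, so $x$ has no poles outside $P$. Also $v_{P_i}(x) = n_i = v_{P_i}(D)$ and $v_Q(x) \geq 0 = v_Q(D)$ for every $Q \notin \mathrm{supp}(D)$, which is exactly the statement $D \leq (x)_0$. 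Finally, because $x$ has a zero of order $n_1 \geq 1$ at $P_1$, it is nonconstant and must therefore have some pole; the only available pole is $P$, so $(x)_\infty = nP$ for some $n \in \mathbb{N}$, as required.

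I do not expect a real obstacle here; the corollary is essentially just a bookkeeping consequence of Lemma \ref{sat}. The one subtlety to keep in mind is to exclude the intended pole place $P$ from $S$ before invoking the theorem, since otherwise the conclusion $v_P(x) \geq 0$ would be imposed as well and force $x$ to be a constant, contradicting the existence of the zero at $P_1$. (In the degenerate case $D = 0$, the statement $D \leq (x)_0$ is automatic, and one can instead apply Riemann--Roch to $\mathcal{L}(mP)$ for $m$ large enough to produce a nonconstant function with its only pole at $P$.)
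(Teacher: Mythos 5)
Your argument is correct and is exactly the intended one: the paper states this corollary without proof as an immediate consequence of Lemma \ref{sat}, and your choice of $S=\mathbb{P}_F\setminus\{P\}$ with prescribed values $x_i=0$ and orders $n_i$ is the standard way to fill in that gap. The two points you flag --- excluding the intended pole $P$ from $S$, and treating $D=0$ separately via Riemann--Roch --- are precisely the right bookkeeping details.
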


In fact, we obtain a stronger conclusion by using the Riemann-Roch Theorem \cite[Theorem 1.5.15]{sti}.

\begin{lemma}\label{lem:rrt}
	%Let $P_1, \ldots, P_n \in \mathbb{P}_F \setminus \lbrace P \rbrace $, and 
	Let $D=\sum n_iP_i$, $n_i\geq 0$, a divisor of degree $d$. Then for any $n\geq 2g+d$ there exists $x\in F$ with  $D\leq (x)_0$ and $(x)_{\infty}=nP$ for some place $P\not \in \mathrm{supp}(D)$. 
\end{lemma}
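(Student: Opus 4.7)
The approach is a direct Riemann--Roch argument: produce $x$ as an element of a suitable Riemann--Roch space that lies in it but not in a smaller one, which forces the pole to have prescribed exact order at a chosen place $P$.

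First, I would pick any place $P \in \mathbb{P}_F \setminus \mathrm{supp}(D)$; such a place exists because a function field over $\mathbb{F}_q$ or $\bar{\mathbb{F}}_p$ has infinitely many places. Then consider the two divisors
\begin{equation*}
A \;:=\; nP - D \quad \text{and} \quad A' \;:=\; (n-1)P - D,
\end{equation*}
so that $\deg A = n - d$ and $\deg A' = n - d - 1$. By the hypothesis $n \geq 2g + d$, both degrees satisfy $\deg A \geq 2g$ and $\deg A' \geq 2g - 1$, so in particular both are strictly greater than $2g - 2$.

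Next I would apply the Riemann--Roch Theorem to conclude
\begin{equation*}
\ell(A) = n - d - g + 1 \quad \text{and} \quad \ell(A') = n - d - g.
\end{equation*}
Since $\ell(A) - \ell(A') = 1$, there exists an element
\begin{equation*}
x \in \mathcal{L}(nP - D) \setminus \mathcal{L}((n-1)P - D).
\end{equation*}
The condition $x \in \mathcal{L}(nP - D)$ gives $(x) + nP - D \geq 0$, which means $v_P(x) \geq -n$, $v_{P_i}(x) \geq n_i$ for each $P_i \in \mathrm{supp}(D)$, and $v_Q(x) \geq 0$ at every remaining place $Q$. The condition $x \notin \mathcal{L}((n-1)P - D)$ forces $v_P(x) < -(n-1)$, so combined with the previous inequality we get $v_P(x) = -n$ exactly.

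Putting these together, $x$ has a unique pole, of order exactly $n$, at the place $P$, so $(x)_\infty = nP$, and the inequalities at the $P_i$ give $D \leq (x)_0$. The only subtle point (and really the only thing to be careful with) is the bookkeeping: making sure the hypothesis $n \geq 2g + d$ is exactly what forces both $\deg A > 2g - 2$ and $\deg A' > 2g - 2$ so that Riemann--Roch gives equality in both dimension formulas. No deeper obstacle arises; the strength of the conclusion over Corollary \ref{rem:sat} is simply that Riemann--Roch lets us pin down the pole order at $P$ exactly, rather than just ``some'' multiplicity, at the cost of requiring $n$ to be large.
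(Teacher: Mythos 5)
Your proof is correct and follows exactly the paper's argument: choose $P\notin\mathrm{supp}(D)$, use Riemann--Roch to get $\ell(nP-D)>\ell((n-1)P-D)$ (since $n-1-d\geq 2g-1>2g-2$), and take $x$ in the difference of the two Riemann--Roch spaces. Your version just spells out the valuation bookkeeping that the paper leaves implicit.
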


\begin{proof}
	Consider the Riemann-Roch spaces $\mathcal{L}(nP-D)$ and $\mathcal{L}((n-1)P-D)$. Since $n\geq 2g+d$, by the Riemann-Roch Theorem we have $\ell (nP-D)>\ell((n-1)P-D)$. Therefore, there exists $x\in \mathcal{L}(nP-D)\setminus \mathcal{L}((n-1)P-D)$, which is an element with desired properties.
\end{proof}

\subsection{Ramification in  the rational function field extensions}\label{rational}

Let $\mathbb F_q(x)/\mathbb F_q(t)$ be the rational function field extension given by the equation $t=\frac{g(x)}{h(x)}$ for some relatively prime polynomials $g(T),h(T)\in \mathbb F_q[T]$ such that not both $g,h$ lie in $\mathbb F_q[T^p]$. Without loss of generality, we assume that $\mathrm{deg}(g)> \mathrm{deg}(h)$; otherwise we consider the extension $\mathbb F_q(x)/\mathbb F_q(1/(t+\alpha))$ for some proper $\alpha \in \mathbb F_q$. Let $P$ be a place of $\mathbb F_q(x)$ of degree $r$, which is not the pole of $x$ or a zero of $h(x)$. Consider the constant field extensions $\mathbb F_q(t)\mathbb F_{q^r}\subseteq  \mathbb F_q(x)\mathbb F_{q^r}$, see Figure \ref{fig:constant}.
We have $[\mathbb F_q(x)\mathbb F_{q^r}:\mathbb F_q(x)]=[\mathbb F_q(t)\mathbb F_{q^r}:\mathbb F_q(t)]=r$ and the extension $\mathbb F_q(x)\mathbb F_{q^r}/\mathbb F_q(t)\mathbb F_{q^r}$ is defined by the same equation $t=\frac{g(x)}{h(x)}$. Note that any place $P^{\prime}\in \mathbb{P}_{\mathbb F_q(x) \mathbb F_{q^r}}$ lying over $P$ is of degree one, i.e., $P^{\prime}=(x=\alpha)$ for some $\alpha \in \mathbb F_{q^r}$. We set $Q^{\prime}=P^{\prime}\cap \mathbb F_q(t)\mathbb F_{q^r}$ and $Q=P^{\prime}\cap \mathbb F_q(t)$. Then $Q^{\prime}=(t=\beta)$, where $\beta=g(\alpha)/h(\alpha)$. Since there is no ramification in a constant field extension \cite[Theorem 3.6.3]{sti}, by the transitivity of ramification indices, we have $e(P| Q)=e(P^{\prime}|Q^{\prime})$. Write $g(T)-\beta h(T)=(T-\alpha)^mr(T)$ for some positive integer $m$ and $r\in \mathbb{F}[T]$ such that $r(\alpha)\neq 0$. We then have
\begin{align}\label{eq:rat}
e(P^{\prime}|Q^{\prime})=v_{P^{\prime}}(t-\beta)=v_{P^{\prime}}(g(x)-\beta h(x))=m \ .
\end{align}
In particular, Equation \eqref{eq:rat} implies that $P|Q$ is ramified if and only if $g(T)-\beta h(T)$ has multiple roots in $\bar{\mathbb F}_p$.
Note that any zero of $h(x)$ is a pole of $t$. Let $h(T)=\prod p_i(T)^{e_{p_i}}$ be the  factorization of $h(T)$ in $\mathbb{F}_q[T]$, where $p_i(T)$'s are distinct irreducible polynomials and $e_{p_i}\geq 1$. We denote by $P_i$ the place of $\mathbb F_q(x)$ corresponding to $p_i(x)$. Then the conorm of $(t=\infty)$ with respect to $\mathbb{F}_q(x)/\mathbb{F}_q(t)$ is given by
\begin{align*}
\mathrm{Con}_{\mathbb{F}_q(x)/\mathbb{F}_q(t)}\left((t=\infty)\right)=e((x=\infty)|(t=\infty))(x=\infty)+\sum e(P_i|(t=\infty))P_i \ .
\end{align*}
%Hence, we conclude that the place $(t=\infty)$ ramifies in $\mathbb{F}_q(x)/\mathbb{F}_q(t)$ if and only if $\mathrm{deg}(h(T))-\mathrm{deg}(g(T))>1$ or $h(T)$ has multiple factors. In this case, the ramification indices are given by 
with
\begin{align*}
e((x=\infty)|(t=\infty))=\mathrm{deg}(g(T))-\mathrm{deg}(h(T)) \quad \text{and} \quad e(P_i|(t=\infty))= e_{p_i} \ .
\end{align*}
%respectively.
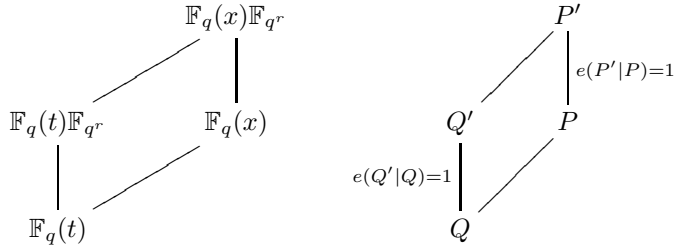
\begin{figure}[!ht]
	%\begin{center}
	\begin{center}{
			\xymatrix{
				&&&& \mathbb F_q(x)\mathbb F_{q^r}&&&{P}^{\prime}\\
				&&&\mathbb F_q(t)\mathbb F_{q^r}\ar@{-}[ur]& \mathbb F_q(x)\ar@{-}[u]&&{Q}^{\prime}\ar@{-}[ur]&P\ar@{-}[u]_{e({P}^{\prime}|P)=1}\\
				&&&\mathbb F_q(t)\ar@{-}[u]\ar@{-}[ur]&&& Q\ar@{-}[u]^{e({Q}^{\prime}|Q)=1} \ar@{-}[ur]&}}
	\end{center}
	%\begin{center}
	\caption{Constant field extensions of rational function fields}\label{fig:constant}
\end{figure}

We finish this section with the following lemma, which is required for the proofs of both $p$-Belyi theorems in the subsequent sections.

\begin{lemma}\label{main} Let $\mathbb F_q(x)$ be a rational function field, and let $S=\{ P_{1},\cdots,P_{n}\}$ be a finite set of places of $\mathbb F_q(x)$ with $P_{i}\not\in\{(x=0),(x=\infty)\}$ for all $i=1,\cdots,n$. Then there exists a subfield $\mathbb F_q(t)$ of $\mathbb F_q(x)$ with the following properties.
	\begin{itemize}
		\item [(i)] The extension $\mathbb F_q(x)/\mathbb F_q(t)$ is tame,
		\item [(ii)] $P_{i}$ lies over $(t=0)$ for all $i=1,\cdots,n$, and
		\item [(iii)] $(t=1)$ and $(t=\infty)$ are the only ramified places
		of $\mathbb F_q(t)$ in $\mathbb F_q(x)/\mathbb F_q(t)$.
	\end{itemize}
\end{lemma}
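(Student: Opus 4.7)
The plan is to exploit that $\bar{\mathbb F}_p^*=\bigcup_r \mathbb F_{q^r}^*$ is a union of finite cyclic groups, so every nonzero element of $\bar{\mathbb F}_p$ is a root of unity; hence a single integer $N$ coprime to $p$ can be chosen making every root of every $p_i$ into an $N$-th root of unity. With such an $N$ in hand I take $t:=1-x^N\in\mathbb F_q(x)$ and verify conditions (i)--(iii) directly.

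For each $i=1,\ldots,n$, let $p_i(T)\in\mathbb F_q[T]$ be the monic irreducible polynomial defining $P_i$ and let $\alpha_i\in\bar{\mathbb F}_p$ be a root of $p_i$. Since $P_i\neq (x=0)$, the element $\alpha_i$ is nonzero, so $\alpha_i\in\mathbb F_{q^{\deg p_i}}^*$ has a well-defined multiplicative order $N_i$ dividing $q^{\deg p_i}-1$; in particular $\gcd(N_i,p)=1$. Setting $N:=\mathrm{lcm}(N_1,\ldots,N_n)$ gives an integer coprime to $p$, and I define $t:=1-x^N$.

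Condition (ii) is immediate: the roots of $p_i$ form a single Frobenius orbit, so they share the order $N_i\mid N$; hence each root $\alpha$ of $p_i$ satisfies $\alpha^N=1$, i.e., $p_i(T)\mid T^N-1$. Thus $p_i(x)\mid x^N-1=-t$ in $\mathbb F_q[x]$, so $v_{P_i}(t)>0$, meaning $P_i$ lies over $(t=0)$. For (i) and (iii) I apply the analysis of Subsection \ref{rational} with $g(T)=1-T^N$ and $h(T)=1$. Since $\gcd(N,p)=1$, the polynomial $g(T)-\beta=(1-\beta)-T^N$ has derivative $-NT^{N-1}$, which vanishes only at $T=0$; so $g(T)-\beta$ is separable whenever $T=0$ is not a root, i.e., whenever $\beta\neq 1$. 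Hence $(t=\beta)$ is unramified in $\mathbb F_q(x)/\mathbb F_q(t)$ for every $\beta\in\bar{\mathbb F}_p\setminus\{1\}$; in particular $(t=0)$ is unramified. For $\beta=1$, $g(T)-1=-T^N$ has a unique root at $T=0$ of multiplicity $N$, giving the single place $(x=0)$ above $(t=1)$ with ramification index $N$. The unique place $(x=\infty)$ above $(t=\infty)$ has ramification index $\deg g-\deg h=N$. Both ramifications are tame because $\gcd(N,p)=1$, which establishes (i) and (iii).

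Since the construction is this direct, there is no genuine obstacle: the whole substance is the fact, specific to positive characteristic, that every element of $\bar{\mathbb F}_p^*$ has finite multiplicative order, so one can produce in one stroke an $N$ with $\gcd(N,p)=1$ relative to which all prescribed places $P_i$ sit as simple zeros of $1-x^N$.
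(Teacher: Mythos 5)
Your proof is correct and takes essentially the same approach as the paper: the paper sets $t=1-x^{q^r-1}$ with $r=\mathrm{lcm}(\deg P_1,\ldots,\deg P_n)$, so that every root of every $p_i$ is a $(q^r-1)$-th root of unity, and then verifies (i)--(iii) by the same separability analysis of $1-T^M-\beta$ from Subsection \ref{rational}. Your exponent $N$ (the lcm of the multiplicative orders of the $\alpha_i$) is just a possibly smaller divisor-type variant of the paper's $q^r-1$ with the identical key property of being coprime to $p$.
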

\begin{proof}
	We denote by $r_{i}$ the degree of $P_{i}$ for $i=1,\cdots,n$, and
	set $r=\mathrm{lcm}(r_{1},\cdots,r_{n})$, where $\mathrm{lcm}$ is the least common multiple. Consider the
	subfield $\mathbb F_q(t)$ of $\mathbb F_q(x)$ given by the equation $t=1-x^{q^{r}-1}$. Then $\mathbb F_q(x)/\mathbb{F}_q(t)$ is an extension of degree $q^{r}-1$. Since $r$ is divisible by the degree of $P_{i}$, by above discussion on ramification in the rational function fields extension, all the places $P_{i}$'s lie over $(t=0)$. Furthermore, $(x=\infty)$ and $(x=0)$ are the only places lying over $(t=\infty)$ and $(t=1)$, respectively, with ramification indices $e((x=\infty)| (t=\infty))=e((x=0)| (t=1))=q^{r}-1$ (see Figure \ref{2}). As the polynomial $T^{q^{r}-1}+\beta$ has no multiple roots for any nonzero $\beta \in \bar{\mathbb F}_p$, these is no other ramification. In particular, $\mathbb F_q(x)/\mathbb F_q(t)$ is a tame extension.	
	
	\begin{figure}[!ht]
		\begin{center}{
				\xymatrix{
					&\mathbb F_q(x)&&P_1 & \ldots \ldots & P_n&(x=\infty)&(x=0)\\
					&\mathbb F_q(t)\ar@{-}[u]&&&(t=0)\ar@{-}[ul]_{e=1}\ar@{-}[ur]_{e=1}&&(t=\infty)\ar@{-}[u]_{e=q^r-1}&(t=1)\ar@{-}[u]_{e=q^r-1}&&&&
				}}
			\end{center}
			\caption{Ramification structure in $\mathbb F_q(x)/\mathbb F_q(t)$} \label{2}
		\end{figure}
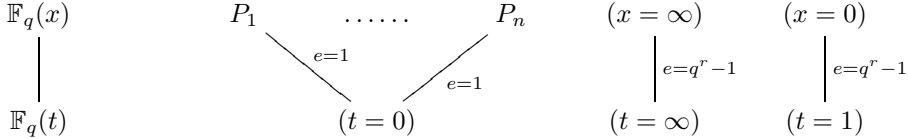 	
	\end{proof}

	\section{The Wild $p$-Belyi Theorem}   \label{sec:wild}

	In this section, we give a self-contained proof for the Wild $p$-Belyi Theorem for any positive characteristic $p$.

	\noindent \textbf{Proof of Theorem \ref{wild:ff}.} Let $x\in F$ be a separating element. Then there exist finitely many ramified places of $\mathbb{F}_q(x)$ in $F/\mathbb{F}_q(x)$. Assume that the ramified places lie in the set ${S}=\{(x=0),(x=\infty), P_{1},\cdots,P_{n}\}\subset \mathbb{P}_{\mathbb{F}_q(x)}$ for some $n\geq 1$. By Lemma \ref{main}, we can find an element $t\in \mathbb{F}_q(x)\subseteq F$ such that any ramified place of $F$ in $F/\mathbb F_q(t)$ lies over a place in the set $\{(t=0),(t=1),(t=\infty)\}$.
	
	We first consider the extension $\mathbb F_q(t)/\mathbb F_q(u)$ given by the equation $u=\frac{t^{p+1}+1}{t}$. The places $(t=0)$ and $(t=\infty)$ lie over $(u=\infty)$ with ramification indices $e((t=0) |(u=\infty))=1$ and $e((t=\infty)|(u=\infty))=p$ (see Figure \ref{3}). Hence, by the Fundamental Equality $(t=0)$ and $(t=\infty)$ are the only places lying over $(u=\infty)$. We have seen in Subsection \ref{rational} that there is no other ramification in $\mathbb F_q(t)/\mathbb F_q(u)$ if $f_{\beta}(T)=T^{p+1}-\beta T+1$ is a polynomial without multiple root for all $\beta\in\bar{\mathbb F}_p$. Suppose that $\alpha$ is a multiple root of $f_{\beta}(T)$ for some $\beta\in\bar{\mathbb F}_p$. Then $\alpha$ is also a root of $f'_{\beta}(T)= T^{p}-\beta$, and hence $\alpha$ is a $p$-th root of $\beta$. However, this means that $f_{\beta}(\alpha)=1$, which gives a contradiction. Moreover, the place $(t=1)$ lies over $(u=2)$. (Note that this is $(u=0)$ in characteristic $2$.)

Next, we consider the extension $\mathbb F_q(u)/\mathbb F_q(y)$ given by the equation $y=\frac{(u-2)^{p+1}+1}{u-2}$. Similarly, we can show that $(u=\infty)$ and $(u=2)$ are all places lying over $(y=\infty)$, and the  ramification occurs  only at $(y=\infty)$. Consequently, $(y=\infty)$ is the only ramified place in the extension $F/\mathbb F_q(y)$.

	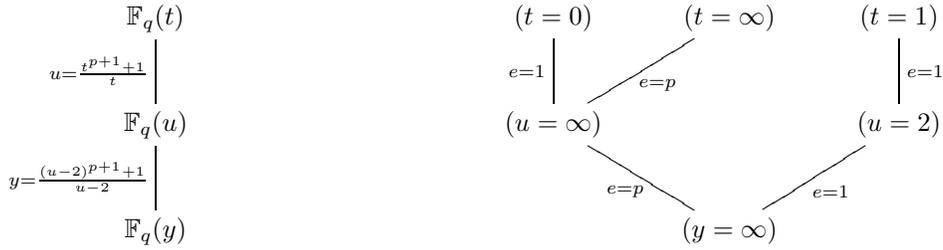
\begin{figure}[!ht]
		%\begin{center}
		\begin{center}{
				\xymatrix{
					&&&\mathbb F_q(t)\ar@{-}[d]_{u=\frac{t^{p+1}+1}{t}}&&&&(t=0)&(t=\infty)&(t=1)&&&&\\
					&&&\mathbb F_q(u)\ar@{-}[d]_{y=\frac{(u-2)^{p+1}+1}{u-2}}&&&&(u=\infty)\ar@{-}[u]^{e=1}\ar@{-}[ur]_{e=p}&&(u=2)\ar@{-}[u]_{e=1}&&&& \\
					&&&\mathbb F_q(y)&&&&&(y=\infty)\ar@{-}[ur]_{e=1}\ar@{-}[ul]^{e=p}&&&&&}}
		\end{center}
		%\begin{center}
		\caption{The Wild $p$-Belyi Theorem} \label{3}
	\end{figure}
	
	\hfill$\Box$\\[.5em]

	\begin{remark}\label{rem.wild}
		We note that in the proof of Theorem \ref{wild:ff} the ramified places in $\mathbb F_q(t)/\mathbb F_q(u)$ and $\mathbb F_q(u)/\mathbb F_q(y)$ have ramification indices $p$, i.e., they are wild, see Figure \ref{3}. It follows from the  Hurwitz Genus Formula \cite[Theorem 3.4.13]{sti} that both ramification have different exponents $2p$.
	\end{remark}

	\section{The Tame $p$-Belyi Theorem}\label{sec:tame}

	As mentioned in \cite{gold1}, a proof of Theorem \ref{tame:ff} for $p>2$ can be given as an application of the following  technical result of Fulton, which shows the existence of a tame rational subfield of a function field. 
	%\begin{quote} \emph{ \textbf{(*)}	
	\begin{proposition} \cite[Proposition 8.1]{fulton} \label{fulton}
		If $F$ is a function field with constant field $\bar{\mathbb F}_p$ with $p>2$, then there exists a rational subfield $\bar{\mathbb F}_p(x)$ of $F$ such that $e(Q| P)=2$ or $1$ for any $Q\in\mathbb{P}_{F}$ and $P\in\mathbb{P}_{\bar{\mathbb F}_p(x)}$ with $Q| P$. 
	\end{proposition}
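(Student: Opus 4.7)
The condition $e(Q|P) \in \{1,2\}$ for all $Q \mid P$ in $F/\bar{\mathbb{F}}_p(x)$ is equivalent to requiring that (a) the pole divisor $(x)_{\infty}$ has no coefficient exceeding $2$, and (b) for every $\alpha \in \bar{\mathbb{F}}_p$ the zero divisor $(x-\alpha)_0$ has no coefficient exceeding $2$. My plan is to construct $x$ in a suitable Riemann-Roch space $\mathcal{L}(D)$ where (a) is forced by the pole structure of $D$ and (b) holds for a Zariski-generic choice.

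For (a), I would fix an effective divisor $D = P_1 + \cdots + P_n$ with simple support and large degree $n \geq 2g+2$. By the Riemann-Roch Theorem $\ell(D) = n - g + 1 \geq g+3$, and since each $\mathcal{L}(D - P_i)$ is a proper subspace of $\mathcal{L}(D)$ over the infinite field $\bar{\mathbb{F}}_p$, I can choose $x \in \mathcal{L}(D) \setminus \bigcup_i \mathcal{L}(D - P_i)$; this forces $(x)_{\infty} = D$ with all multiplicities equal to $1$. For (b), I would analyze the \emph{bad} locus: at a place $Q \notin \mathrm{supp}(D)$ with uniformizer $t_Q$ and Taylor expansion $x = a_0 + a_1 t_Q + a_2 t_Q^2 + \cdots$, the condition $v_Q(x - a_0) \geq 3$ is equivalent to the two linear conditions $a_1 = a_2 = 0$ on $\mathcal{L}(D)$. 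For fixed $Q$ the bad locus in $\mathcal{L}(D)$ therefore has codimension at most $2$, and as $Q$ varies over the $1$-dimensional set $\mathbb{P}_F \setminus \mathrm{supp}(D)$, the union of these loci has codimension at least $1$ in $\mathcal{L}(D)$, so a generic $x$ avoids it.

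The principal obstacle is showing that the bad locus is a \emph{proper} subvariety of $\mathcal{L}(D)$, i.e., that $a_1 = 0$ and $a_2 = 0$ are genuinely independent conditions for some $Q$. This is exactly where $p > 2$ enters: extracting the coefficient $a_2$ from an algebraic expression for $x$ always involves the factor $2$ coming from differentiating $t_Q^2$, which is invertible precisely when $p > 2$. In characteristic $2$ this naive count collapses, and indeed the statement does not transfer to $p = 2$ in a useful way — the resulting $e = 2$ ramification would be wild rather than tame — which is why the paper handles the case $p = 2$ by the separate, self-contained argument of Section~\ref{sec:tame}. Assuming $p > 2$, one finishes by verifying via Riemann-Roch that the evaluation map $\mathcal{L}(D) \to \mathcal{O}_Q / \mathfrak{m}_Q^3$ is surjective for a general $Q$ once $\deg D$ is taken sufficiently large, so the codimension count above is strict and the desired $x$ exists.
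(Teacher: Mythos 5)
The paper does not actually prove this proposition --- it is quoted verbatim from Fulton --- so there is no in-paper argument to compare against; your general-position proof is, however, essentially the classical route and close in spirit to Fulton's own dimension count. The skeleton is sound: the reduction to bounding the multiplicities of $(x)_{\infty}$ and of $(x-\alpha)_0$ is correct; choosing $x\in\mathcal L(D)\setminus\bigcup_i\mathcal L(D-P_i)$ does force simple poles; and the incidence-variety count (codimension-$2$ fibers over a one-dimensional base of places $Q$, hence a proper constructible image in $\mathcal L(D)$) does show that a generic $x$ has $v_Q(x-x(Q))\le 2$ everywhere. One small sharpening: the surjectivity of $\mathcal L(D)\to\mathcal O_Q/\mathfrak m_Q^3$ holds for \emph{every} $Q\notin\mathrm{supp}(D)$ once $\deg D\ge 2g+2$, since then $\ell(D-3Q)=\ell(D)-3$ by Riemann--Roch; you only need it for all $Q$, not ``general'' $Q$, to bound the fibre dimension uniformly.

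The genuine flaw is your localization of where $p>2$ enters. Extracting the coefficient $a_2$ of $t_Q^2$ is not done by dividing the second ordinary derivative by $2$; it is the second Hasse derivative, equivalently direct coefficient extraction in $\widehat{\mathcal O}_Q\cong\bar{\mathbb F}_p[[t_Q]]$, and it is a perfectly good linear functional in every characteristic. The independence of $a_1=0$ and $a_2=0$ is the Riemann--Roch statement above, which is characteristic-free, so your dimension count does \emph{not} collapse when $p=2$: it still produces an $x$ with all ramification indices at most $2$. The actual reason the proposition is stated only for $p>2$ is the point you mention only in passing: an index-$2$ ramification is tame exactly when $p\ne 2$, so only for odd $p$ does the conclusion deliver the tame rational subfield that the proof of Theorem~\ref{tame:ff} requires. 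In characteristic $2$ the same generic $x$ exists but is useless (its ramification is wild, with different exponent at least $2$ at each index-$2$ place), which is precisely why the paper must replace Proposition~\ref{fulton} by the pseudo-tame construction of Subsection~\ref{p=2}, producing tame ramification of odd index instead. As written, your proof of the stated proposition goes through, but the sentence attributing the hypothesis $p>2$ to an invertibility of $2$ in the jet computation should be deleted or corrected, since it asserts a false dichotomy about the linear algebra.
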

	%\end{quote}
	Therefore, we  first prove the existence of a tame rational subfield of a function field $F$ over $\bar{\mathbb{F}}_p$ for $p=2$.  We will then give a proof of Theorem \ref{tame:ff}.
	
	\subsection{The Tame $2$-Belyi Theorem}\label{p=2}

	%We first fix some notation which is required for this subsection. Let $\Gamma=\mathrm{PGL}_2(F^4)$ and $\mathcal{H}=F\setminus F^2$.
	Throughout this subsection, we assume that $F$ is a function field over $\mathbb{F}=\bar{\mathbb{F}}_2$. An element $x\in F$ is called \textit{pseudo-tame at} $P\in \mathbb{P}_F$ if there exists $z\in F$ such that $x+z^4$ is tame at $P$. Moreover, we say that $x$ is a \textit{pseudo-tame element} of $F$ if $x$ is pseudo-tame at $P$ for all $P\in  \mathbb{P}_F$.

	\begin{lemma}\label{lem:aa} Let $x\in F$ and  $\Gamma$ be the projective general linear group over $F^4$.
		\begin{itemize}
			\item[(i)]  $x$ is pseudo-tame at $P$ if and only if the degree of any non-vanishing term in the Laurent series expansion of $x$ with smaller than $v_P(dx)+1$ is multiple of four.
			\item[(ii)] $x$ is pseudo-tame at $P$ if and only if $\gamma(x)$ is pseudo-tame at $P$ for any $\gamma \in \Gamma$.
		\end{itemize}
	\end{lemma}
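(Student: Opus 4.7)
The plan is to work locally at $P$: fix a uniformizer $t$ of the completion $\hat F_P$ and expand $x = \sum_i a_i t^i$ as a Laurent series, exploiting the characteristic-$2$ Frobenius identity $(\sum_j b_j t^j)^4 = \sum_j b_j^4 t^{4j}$, which shows that every fourth power has Laurent support contained in $4\mathbb{Z}$.

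For part (i), I would first verify the identity $v_P(dx) + 1 = \min\{i \text{ odd} : a_i \neq 0\}$, since $dx = \sum_i i\, a_i t^{i-1}\,dt$ and only odd $i$ survive in characteristic $2$; call this value $k$. Tameness at $P$ of an element $y \in F$ amounts to $|v_P(y - y(P))|$ (or $|v_P(y)|$ if $y$ has a pole at $P$) being odd, since tame in characteristic $2$ means the ramification index is coprime to $2$. Adding a fourth power $z^4$ to $x$ can only modify coefficients at indices in $4\mathbb{Z}$; after optimally cancelling the $4$-divisible coefficients below $k$, the smallest nonzero Laurent index of $x + z^4$ becomes $m := \min\{i : a_i \neq 0,\ 4 \nmid i\}$. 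Since $a_k \neq 0$ and $k$ is odd, one has $m \leq k$, with equality forcing $m = k$ (odd, tame), while $m < k$ forces $m \equiv 2 \pmod{4}$ (even, wild). Hence $x$ is pseudo-tame at $P$ iff every nonzero coefficient $a_i$ with $i < k$ satisfies $4 \mid i$, which is the stated condition.

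For part (ii), I would apply (i) and verify invariance of its Laurent condition under a generating set for $\Gamma = \mathrm{PGL}_2(F^4)$, namely translations $x \mapsto x + e$ and scalings $x \mapsto e x$ with $e \in F^4$, together with the inversion $x \mapsto 1/x$. Translations are immediate since $de = 0$ in characteristic $2$, so $v_P(d(x+e)) = v_P(dx)$, while only $4$-divisible coefficients are altered. For $e = c^4$, scaling shifts $k$ to $k' = k + 4 v_P(c)$, and a direct convolution argument shows that each non-$4$-divisible coefficient of $c^4 x$ at index $i < k'$ is a sum of terms $c_j^4\, a_{i-4j}$ with $i - 4j < k$ and $4 \nmid (i - 4j)$, all of which vanish by the hypothesis on $x$. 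For inversion, write $x = a_n t^n v$ with $v$ a unit in $\hat F_P$, so that $k' = k - 2n$; if $k > n$, the condition on $x$ forces $4 \mid n$, and, using that the residue field $\bar{\mathbb F}_2$ is perfect, the truncation of $v$ modulo $t^{k-n}$ is identified with a fourth power $w^4 \in \hat F_P$. Consequently $v^{-1} \equiv w^{-4} \pmod{t^{k-n}}$, whence $1/x \equiv (a_n^{-1/4} t^{-n/4} w^{-1})^4 \pmod{t^{k-2n}}$, which is a fourth power and hence has Laurent support on $4\mathbb{Z}$ below $k'$, verifying the condition. The case $k = n$ is trivial since then $x$ (and thus $1/x$) is already tame.

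The main obstacle is the inversion step: one must recognize that the structural condition on $x$ forces a fourth-power truncation of its unit part, so that the reciprocal automatically inherits the same mod-$4$ support pattern. The translation and scaling cases reduce to routine coefficient bookkeeping.
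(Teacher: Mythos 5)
Your argument is correct and follows exactly the route the paper intends: part (i) by comparing Laurent supports, noting that adding a fourth power only alters coefficients at indices in $4\mathbb{Z}$ while the first odd-index coefficient (at $v_P(dx)+1$) is untouched, and part (ii) by reducing to the generators $x\mapsto a^4x+b^4$ and $x\mapsto 1/x$ and invoking (i). The paper's own proof is only a two-line sketch (``straightforward by the definition'' for (i), and the observation about $a^4x+b^4$ and $1/x$ for (ii)), so your write-up simply supplies the coefficient bookkeeping and the fourth-power-truncation argument for inversion that the authors leave implicit.
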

	\begin{proof}
		\begin{itemize}
			\item[(i)] The proof is straightforward by the definition of being pseudo-tame.
			\item[(ii)] It is enough to observe that if $x$ is pseudo-tame at $P$, then $a^4x+b^4$ and $1/x$ are also pseudo-tame at $P$ by $(i)$.
		\end{itemize}
	\end{proof}
	
	For $x,y\in \mathcal{H}=F\setminus F^2$, we write $x=x_0^4+x_1^4y+x_2^4y^2+x_3^4y^3$ for some $x_0,x_1,x_2,x_3\in F$ and define
	\begin{align}\label{eq:a}
	a(x,y)= \frac{(x_1^2x_3^2+x_2^4)y}{x_3^4y^2+x_1^4}  \ .
	\end{align}
	The notion $a(x,y)$ is introduced in \cite{sy}. We can summarize the required properties of $a(x,y)$, which are given in Proposition 2.7 and Theorem 2.10 in \cite{sy}, as follows.
	\begin{lemma}\label{lem:a}
		\begin{itemize}
			\item[(i)] For any $x,y,t\in \mathcal{H}$
			\begin{align}\label{eq:cocycle}
			a(x,y)+a(y,t)+a(t,x)\equiv 0 \mod F^2 \ .
			\end{align}
			%\item[(ii)] $x$ is pseudo-tame at $P$ if and only if $\gamma(x)$ is pseudo-tame at $P$ for any $\gamma \in \Gamma$. 
			\item[(ii)]Let $a(x,y)\equiv a \mod F^2$ and $y$ be pseudo-tame at $P$. Then $x$ is pseudo-tame at $P$ if and only if $a$ is regular at $P$, i.e., there exists $\tilde{a}\in F$ with $\tilde{a}\equiv a \mod F^2$ and $v_P(\tilde{a})\geq 0$.
		\end{itemize}
	\end{lemma}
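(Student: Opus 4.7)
The plan is to exploit two characteristic-$2$ simplifications that collapse the numerator and denominator in the definition of $a(x,y)$ into perfect squares. Writing $x=x_0^4+x_1^4y+x_2^4y^2+x_3^4y^3$,
\[
x_1^2x_3^2+x_2^4=(x_1x_3+x_2^2)^2 \qquad\text{and}\qquad x_1^4+x_3^4y^2=(x_1^2+x_3^2y)^2,
\]
so $a(x,y)=w(x,y)^2\,y$ with $w(x,y)=(x_1x_3+x_2^2)/(x_1^2+x_3^2y)$. Differentiating term by term using $d(z^4)=0$ yields $dx=(x_1^2+x_3^2y)^2\,dy$, so the denominator of $a(x,y)$ is exactly $dx/dy$. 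Two further consequences I would use throughout are $x\equiv(x_1^2+x_3^2y)^2y\pmod{F^2}$ (since $x_0^4$ and $x_2^4y^2=(x_2^2y)^2$ lie in $F^2$) and $v_P(dx)=2v_P(x_1^2+x_3^2y)+v_P(dy)$.

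For part (i), I would expand $x,y,t$ cyclically---each in the $F^4$-basis $\{1,\ast,\ast^2,\ast^3\}$ determined by the next variable---and substitute into the three copies of $a$. After the simplifications above, each summand takes the form $w^2\cdot(\text{last variable})$. The three mod-$F^2$ relations $x\equiv u_{xy}^2y$, $y\equiv u_{yt}^2t$, $t\equiv u_{tx}^2x$, chained together, give $(1+u_{xy}u_{yt}u_{tx})^2 x\in F^2$; since $x\notin F^2$, this forces the algebraic identity $u_{xy}u_{yt}u_{tx}=1$. Reducing the cocycle modulo $F^2$ using these relations, all three terms become $F^2$-multiples of $t$; setting the coefficient of $t$ to zero and extracting a square root (available in characteristic $2$) reduces the cocycle identity to the polynomial identity $w_{xy}u_{yt}+w_{yt}+w_{tx}u_{xy}u_{yt}=0$ in the coefficients of the cyclic expansions, which is verifiable by direct substitution using $u_{xy}u_{yt}u_{tx}=1$.

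For part (ii), I would work locally at $P$ via Laurent expansions. Using Lemma \ref{lem:aa}(ii), I first normalize $y$ by a $\Gamma$-translate to arrange that $y$ is genuinely tame (not merely pseudo-tame) at $P$. Lemma \ref{lem:aa}(i) then translates pseudo-tameness of $x$ into the requirement that every Laurent term of $x$ at $P$ of degree less than $v_P(dx)+1=2v_P(x_1^2+x_3^2y)+v_P(dy)+1$ has degree divisible by $4$. The summand $x_0^4$ contributes only exponents in $4\mathbb{Z}$ automatically; using the tameness of $y$, I would analyze $x_1^4y+x_2^4y^2+x_3^4y^3$ and show that, after discarding the terms forced into $4\mathbb{Z}$, the residual obstruction is exactly the principal part at $P$ of $(x_1x_3+x_2^2)^2y/(x_1^2+x_3^2y)^2=a(x,y)$ modulo $F^2$. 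Pseudo-tameness of $x$ at $P$ then becomes equivalent to the existence of an element of $F^2$ cancelling this principal part, which is precisely regularity of $a(x,y)\bmod F^2$ at $P$.

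The main obstacle is the bookkeeping in part (ii): matching, term by term, the low-order Laurent coefficients of $x$ against the rational function $a(x,y)$ after stripping off the automatic fourth-power cancellations, and identifying the surviving condition with regularity modulo $F^2$. The $\Gamma$-normalization supplied by Lemma \ref{lem:aa}(ii) is essential, since without first reducing to a tame $y$ at $P$ the interplay between the pseudo-tameness of $y$ and the quartic expansion of $x$ is considerably harder to track. Part (i), while less subtle in principle, would gain in transparency from an intrinsic description of $a(x,y)\bmod F^2$ (for instance, as a Cartier-type functional on the form $dx/dy$); I would attempt to find such a description before committing to the symbolic calculation sketched above.
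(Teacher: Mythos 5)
First, a point of comparison: the paper does not prove this lemma at all --- it imports both parts verbatim from Sugiyama--Yasuda \cite{sy} (their Proposition~2.7 and Theorem~2.10), so there is no in-paper argument to measure your proposal against; you are attempting to reprove the imported results. Your structural observations are correct and genuinely useful: in characteristic $2$ one has $a(x,y)=w_{xy}^2\,y$ with $w_{xy}=(x_1x_3+x_2^2)/(x_1^2+x_3^2y)$, the identity $dx=(x_1^2+x_3^2y)^2\,dy$, the congruence $x\equiv(x_1^2+x_3^2y)^2y \pmod{F^2}$, and hence the chain-rule relation $u_{xy}u_{yt}u_{tx}=1$. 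Your reduction of part (i) to the single identity $w_{xy}u_{yt}+w_{yt}+w_{tx}u_{xy}u_{yt}=0$ is also correct.

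However, both parts stop exactly where the real work begins. In (i), the claim that this identity is ``verifiable by direct substitution using $u_{xy}u_{yt}u_{tx}=1$'' is unjustified: that relation records only the first-order data $u_{xy}^2=dx/dy$, whereas each $w$ also involves $x_2$ and the product $x_1x_3$, i.e.\ second-order data of the $F^4$-expansion (for instance $x_3^2=ds/dy$ with $s=(dx/dy)^{1/2}$). Verifying the identity requires substituting $y=y_0^4+y_1^4t+y_2^4t^2+y_3^4t^3$ into the expansion of $x$ in powers of $y$, reducing $y^2,y^3$ against the basis $\{1,t,t^2,t^3\}$ over $F^4$, and tracking how the numerators compose and invert; this computation is precisely the content of \cite[Prop.~2.7]{sy} and is absent from your proposal. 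Part (ii) has the same character: the matching of the low-order Laurent coefficients of $x$ against the principal part of $a(x,y)$ is announced rather than performed, and you also omit the (necessary, though short) check that replacing $y$ by the tame element $y+z^4$ changes $a(x,y)$ only by an element of $F^2$ --- without it, the $\Gamma$-normalization could a priori alter the regularity of $a$ at $P$. A direct computation shows the new numerator coefficient $x_1'x_3'+x_2'^2$ equals $x_1x_3+x_2^2$ and the denominator is unchanged, so this step is repairable, but as written the proposal is a sound plan with its two decisive computations missing.
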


	\noindent One of the main tools to show the existence of a pseudo-tame element is \textit{Tsen's Theorem} stated as follows:\\
	A function field $F$ over $\bar{\mathbb{F}}_p$ is quasi-algebraically closed, i.e, any homogeneous polynomial over $F$ of $n$ variables whose degree is less than $n$ has a non-trivial solution.

	\begin{proposition}\label{pro:g}
		For any $x,a\in \mathcal{H}$, there exists $y\in \mathcal{H}$ such that $a(x,y)\equiv a \mod F^2$.
	\end{proposition}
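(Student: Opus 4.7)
The plan is to reduce, via the cocycle identity of Lemma~\ref{lem:a}(i), to the special case where $x$ equals a fixed separating element $y_0 \in \mathcal{H}$, and then recast the condition as a homogeneous quadratic equation in three variables over $F$, to which Tsen's Theorem applies.

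First I would use the cocycle with $t = y_0$, together with the symmetry $a(u, v) \equiv a(v, u) \pmod{F^2}$ (which follows from setting $x = t = u$ and $y = v$ in the cocycle and noting that $a(u, u) = 0$), to derive $a(x, y) \equiv a(y_0, y) + a(y_0, x) \pmod{F^2}$. This reduces the problem to finding $y \in \mathcal{H}$ with $a(y_0, y) \equiv a' \pmod{F^2}$ for $a' := a + a(y_0, x) \in F$. If $a' \in F^2$, take $y = y_0$ (since $a(y_0, y_0) = 0$); otherwise $a' \in \mathcal{H}$ and we relabel to assume $x = y_0$.

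Next I would parametrize in the $F^2$-basis $\{1, y_0\}$: write $y = \lambda^2 + \mu^2 y_0$, $a = a_0^2 + a_1^2 y_0$, and further $\lambda = \lambda_0^2 + \lambda_1^2 y_0$, $\mu = \mu_0^2 + \mu_1^2 y_0$, where $\mu \neq 0$ corresponds to $y \in \mathcal{H}$ and $a_1 \neq 0$ to $a \in \mathcal{H}$. Decomposing the defining relation $y_0 = x_0^4 + x_1^4 y + x_2^4 y^2 + x_3^4 y^3$ in the basis $\{1, y_0\}$ over $F^2$ yields $x_1^2 + x_3^2 y = 1/\mu$ and $x_0^2 + x_2^2 y = \lambda/\mu$, from which the $x_i$ can be extracted explicitly in terms of $\lambda_0, \lambda_1, \mu_0, \mu_1$. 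Substituting into $a(y_0, y) = (x_1 x_3 + x_2^2)^2 y/(x_1^2 + x_3^2 y)^2$ and requiring that the $y_0$-coefficient of $a(y_0, y) + a$ vanish (so that the sum lies in $F^2$), after clearing denominators one obtains (and notably $\lambda_0$ cancels) the factorization
\[
\mu \cdot \bigl(\lambda_1^2 + \mu_0 \mu_1 + a_1 \mu_0^2 + a_1 y_0 \mu_1^2\bigr) = 0.
\]

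Since $\mu \neq 0$, the condition becomes $\lambda_1^2 + \mu_0 \mu_1 + a_1 \mu_0^2 + a_1 y_0 \mu_1^2 = 0$, a homogeneous form of degree $2$ in the three variables $\mu_0, \mu_1, \lambda_1 \in F$. Since $F$ is a function field over $\bar{\mathbb{F}}_2$, Tsen's Theorem ($n = 3 > 2 = d$) yields a nontrivial zero. If $(\mu_0, \mu_1) = (0, 0)$ in such a zero then $\lambda_1^2 = 0$ forces $\lambda_1 = 0$ as well, contradicting nontriviality; hence $(\mu_0, \mu_1) \neq (0, 0)$, so $\mu \neq 0$, and setting $\lambda_0 = 0$ recovers $y = \lambda^2 + \mu^2 y_0 \in \mathcal{H}$ satisfying the required condition. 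The main obstacle is the middle computation: arriving at the clean factorization $\mu \cdot (\text{quadratic}) = 0$ requires careful bookkeeping in the basis decomposition, crucially exploiting the characteristic-$2$ identities $(u+v)^2 = u^2 + v^2$ and $\sqrt{u^2 + v^2 y_0^2} = u + v y_0$ for $u, v \in F$.
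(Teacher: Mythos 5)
Your proof is correct and follows essentially the same route as the paper: both reduce the condition to the vanishing of a homogeneous quadratic form in three variables over $F$ (your $\lambda_1^2+\mu_0\mu_1+a_1\mu_0^2+a_1y_0\mu_1^2$ is, up to relabelling, the paper's $y_2^2+y_1y_3+by_1^2+bxy_3^2$ obtained from $y=y_0^4+y_1^4x+y_2^4x^2+y_3^4x^3$) and then invoke Tsen's Theorem, with the same non-degeneracy check guaranteeing $y\in\mathcal{H}$. The only differences are cosmetic: the paper skips your preliminary reduction to a fixed $y_0$ and, rather than computing $a(x,y)$ by decomposing the first argument in powers of the unknown $y$ and inverting, uses the symmetry $a(x,y)\equiv a(y,x) \bmod F^2$ so that the coefficients of $y$ in powers of $x$ are immediately the free unknowns.
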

	\begin{proof}
		Since $F=F^2\oplus xF^2$, there exists unique $b\in F$ such that $a\equiv b^2x \mod F^2$. Write $y=y_0^4+y_1^4x+y_2^4x^2+y_3^4x^3$. Note that by Equation \eqref{eq:cocycle}, $a(x,y)\equiv a(y,x) \mod F^2$, and hence
		\begin{align}\label{eq:mod}
		a(x,y)\equiv \frac{(y_1^2y_3^2+y_2^4)x}{y_3^4x^2+y_1^4}\equiv b^2x \mod F^2 \ .
		\end{align}
		This holds if and only if $b\equiv (y_1y_3+y_2^2)/(y_3^2x+y_1^2) \mod F^2$. By Tsen's Theorem, there exists an element $y\in F$ satisfying Equation \eqref{eq:mod}.
	\end{proof}
	
	We need the following two lemmata, which will be used in the proof of the existence of a pseudo-tame element.
	
	\begin{lemma}\label{z}
		Let $x\in \mathcal{H}$ and $P, Q \in \mathbb{P}_F\setminus \mathrm{supp}(x)_{\infty}$. Then there exists $z\in F$ such that $z$ has simple poles, $v_Q(z)\geq 0$, and $x+z^2$ is tame at $P$.
	\end{lemma}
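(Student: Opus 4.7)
The plan is to construct $z$ via the Riemann--Roch theorem by prescribing its initial Laurent coefficients at $P$, chosen so that $z^2$ cancels the even-degree part of the Laurent expansion of $x$ up to the first odd-degree non-vanishing term. The driving observation is that in characteristic $2$, squaring touches only even-degree Laurent coefficients; consequently, adding a carefully chosen $z^2$ transforms $x$ into a function whose leading non-constant Laurent term at $P$ has odd degree, which is exactly the tameness condition at $P$.

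First I would fix a local uniformizer $t$ at $P$ and write $x=\sum_{i\ge 0}a_i t^i$ in the completion of $F$ at $P$, possible because $v_P(x)\ge 0$. The hypothesis $x\in\mathcal{H}=F\setminus F^2$ forces $dx\ne 0$ in $\Omega^1_F$, hence $v_P(dx)<\infty$. Since only odd-index Laurent terms contribute to $dx$ in characteristic $2$, there is a smallest odd index $i_0$ with $a_{i_0}\ne 0$, giving $v_P(dx)=i_0-1$. Set $k=(i_0-1)/2$; since $\bar{\mathbb F}_2$ is perfect, put $c_j:=\sqrt{a_{2j}}\in\bar{\mathbb F}_2$ for $0\le j\le k$.

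Next I would choose a reduced effective divisor $D=R_1+\cdots+R_m$ consisting of $m>2g+k$ distinct places of $F$ disjoint from $\{P,Q\}$. The truncated-Laurent map $\mathcal{L}(D)\to\bar{\mathbb F}_2[t]/(t^{k+1})$ sending $z$ to the polynomial formed by its first $k+1$ Laurent coefficients at $P$ has kernel $\mathcal{L}(D-(k+1)P)$. By the Riemann--Roch Theorem (both $D$ and $D-(k+1)P$ have degree exceeding $2g-2$), $\ell(D)-\ell(D-(k+1)P)=k+1$, matching the codomain dimension, so the map is surjective; choose $z\in\mathcal{L}(D)$ whose image equals $\sum_{j=0}^{k}c_j t^j$.

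Finally I verify the three required properties. Because $D$ is reduced and disjoint from $\{P,Q\}$, $z$ has only simple poles (all lying in $\mathrm{supp}(D)$) and $v_Q(z)\ge 0$. In characteristic $2$ the Frobenius is additive, so $z^2\equiv \sum_{j=0}^{k}c_j^2 t^{2j}=\sum_{j=0}^{k}a_{2j} t^{2j}\pmod{t^{2k+2}}$. Hence in the Laurent expansion of $x+z^2$ at $P$, every even-degree coefficient of degree at most $2k=i_0-1$ cancels, while odd-degree coefficients of degree less than $i_0$ are already zero by the minimality of $i_0$; the first non-vanishing coefficient is $a_{i_0}$ at the odd degree $i_0$. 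Therefore $v_P(x+z^2)=i_0$ is odd and $x+z^2$ is tame at $P$. The only delicate point is the existence of the odd index $i_0$, which follows directly from $x\notin F^2$ via $dx\ne 0$; everything else is a routine Riemann--Roch dimension count.
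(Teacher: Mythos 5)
Your proof is correct and follows essentially the same strategy as the paper's: use Riemann--Roch on a large reduced divisor $D$ avoiding $P$ and $Q$ to produce $z\in\mathcal{L}(D)$ (hence with only simple poles and $v_Q(z)\ge 0$) whose square cancels the even-degree part of the Laurent expansion of $x$ at $P$ up to the first odd-index coefficient, which exists because $dx\neq 0$. The only difference is cosmetic: the paper cancels the leading even term one step at a time and iterates, whereas you do it in a single step via surjectivity of the truncated-expansion map $\mathcal{L}(D)\to\bar{\mathbb F}_2[t]/(t^{k+1})$.
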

	\begin{proof}
		Let $u\in F$ be a prime element at $P$, and $$x=a_0+a_1u+a_2u^2+\cdots$$ be the Laurent series expansion of $x$. Let $j$ be the integer such that $a_j$ is the first non-vanishing term in the expansion. Note that $j\geq 0$ as $P\not\in \mathrm{supp}(x)_{\infty}$. If $j$ is odd, then it is enough to choose a nonzero $z\in \mathbb{F}$. Suppose that $j=2n$. Then consider a divisor $R=R_1+\cdots +R_t$, where $t$ is sufficiently large, the $R_i$'s are pairwise distinct, and $P,Q\not \in \mathrm{supp}(R)$. By the Riemann Roch Theorem, there exists $z\in F$ such that
		\begin{align*}
		z\in \mathcal{L}(R-nP)\setminus \mathcal{L}(R-(n+1)P)\ .
		\end{align*}
		Then $z$ has simple poles, $v_Q(z)\geq 0$, and $v_P(z)=n$. There exists $\alpha \in \mathbb{F}$ with $v_P(x+\alpha z^2)>2n$. Then after finitely many steps we obtain an element satisfying the desired properties by Strict Triangle Inequality (\cite[Lemma 1.1.11]{sti}).
	\end{proof}
	
	\begin{lemma}\label{y}
		Let $R=R_1+\cdots +R_t$, and $P_1,\ldots,P_n,Q\in \mathbb{P}_F\setminus \mathrm{supp}(R)$, where $t$ is sufficiently large and the $R_i$'s are pairwise distinct. Then there exists $y\in F$ such that $(y)_{\infty}=R$, $P_i\not \in \mathrm{supp}(y)_0$, and $v_Q(y)\geq k$ for some positive integer $k$.
	\end{lemma}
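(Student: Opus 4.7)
The plan is to construct $y$ as an element of a Riemann--Roch space that avoids several codimension-one subspaces encoding the non-vanishing conditions we need. This is the natural multi-constraint refinement of Lemma \ref{lem:rrt}, in which only a single such condition was imposed.

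Concretely, fix any positive integer $k$ (for instance $k=1$) and set $D:=R-kQ$. Every $y\in\mathcal{L}(D)$ automatically satisfies $(y)_{\infty}\leq R$, $v_Q(y)\geq k$, and $v_P(y)\geq 0$ for all $P\notin\mathrm{supp}(R)$. To force $(y)_{\infty}=R$ exactly we additionally need $v_{R_i}(y)=-1$ for each $i$, equivalently $y\notin\mathcal{L}(D-R_i)$; and to force $P_j\notin\mathrm{supp}(y)_0$ we need $v_{P_j}(y)=0$, equivalently $y\notin\mathcal{L}(D-P_j)$ (the inequality $v_{P_j}(y)\geq 0$ being already automatic). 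By the Riemann--Roch Theorem, once $t$ is large enough that $t-k-1\geq 2g-1$, the divisors $D$, $D-R_i$, and $D-P_j$ are all non-special, so that
\[\ell(D)=\ell(D-R_i)+1=\ell(D-P_j)+1.\]
In particular each of the $t+n$ subspaces $\mathcal{L}(D-R_i)$ and $\mathcal{L}(D-P_j)$ is a proper $\mathbb{F}$-subspace of $\mathcal{L}(D)$.

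Since $\mathbb{F}=\bar{\mathbb{F}}_2$ is infinite, a finite-dimensional $\mathbb{F}$-vector space is never exhausted by finitely many proper subspaces, so one may choose
\[y\in\mathcal{L}(D)\setminus\left(\bigcup_{i=1}^{t}\mathcal{L}(D-R_i)\ \cup\ \bigcup_{j=1}^{n}\mathcal{L}(D-P_j)\right),\]
and any such $y$ has exactly the required properties. The only point demanding care is the choice of $t$ large enough that subtracting any of the places $R_i$ or $P_j$ strictly drops the dimension of the Riemann--Roch space; this is precisely the ``$t$ sufficiently large'' hypothesis in the statement, and no serious obstacle arises.
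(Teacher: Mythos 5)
Your proposal is correct and follows essentially the same route as the paper: both arguments work inside $\mathcal{L}(R-kQ)$, use Riemann--Roch (for $t$ large) to see that each condition $v_{R_i}(y)=-1$ and $v_{P_j}(y)=0$ fails only on a proper subspace, and then use that $\bar{\mathbb{F}}_2$ is infinite to avoid all of these subspaces at once. The paper phrases this last step as choosing a generic linear combination of elements $z_j\in\mathcal{L}(R-kQ)\setminus\mathcal{L}(R-kQ-R_j)$ and $x_i\in\mathcal{L}(R-kQ)\setminus\mathcal{L}(R-kQ-P_i)$, which is just a constructive form of your ``a vector space over an infinite field is not a finite union of proper subspaces'' argument.
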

	\begin{proof}
		By the Riemann Roch Theorem, there exist $z_j, x_i$ such that
		\begin{align*}
		z_j\in \mathcal{L}(R-kQ)\setminus \mathcal{L}(R-kQ-R_j) \quad \textrm{ and } \quad x_i\in \mathcal{L}(R-kQ)\setminus 
		\mathcal{L}(R-kQ-P_i)
		\end{align*}
		for all $i=1, \ldots, t$ and $j=1, \ldots n$.
		Note that $z_j,x_i$ have simple poles in the $\mathrm{supp}(R)$ with $v_{P_i}(x_i)=0$ and $v_{R_j}(z_j)=-1$. As $\mathbb{F}$ is algebraically closed, there exist $\alpha_j, \beta_i\in \mathbb{F}$ such that 
		\begin{align*}
		y=\sum_{j=1}^{t}\alpha_jz_j+\sum_{j=1}^{n} \beta_ix_i
		\end{align*}
		has the desired properties.
	\end{proof}  
	%We now determine the ramification structure of a rational function
	%field extension.
	
	\begin{proposition}\label{pro:pt}
		Let $F$ be a function field over $\mathbb{F}=\bar{\mathbb{F}}_2$. Then there exists a pseudo-tame element $x\in F$.
	\end{proposition}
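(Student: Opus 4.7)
The plan is to start from an arbitrary separating element $x \in \mathcal{H}$ and build a pseudo-tame $y$ by combining local Laurent-series corrections at the finitely many places where $x$ fails to be pseudo-tame with the Tsen-theoretic existence result in Proposition~\ref{pro:g} and the cocycle identity of Lemma~\ref{lem:a}(i).

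First, I would pick any $x \in \mathcal{H}$; by Lemma~\ref{lem:aa}(i) the only possible bad places for $x$ are those with $v_P(dx) \geq 0$ or $v_P(x) < 0$, so the bad set $S = \{P_1, \ldots, P_n\}$ is finite. Fix an auxiliary place $Q \in \mathbb{P}_F \setminus (S \cup \mathrm{supp}(x)_\infty)$. At each $P_i \in S$ I would invoke Lemma~\ref{z} to produce $z_i \in F$ with simple poles (disjoint from $S \cup \{Q\}$ and from the poles of the other $z_j$'s), with $v_Q(z_i) \geq 0$, and with $x + z_i^2$ tame at $P_i$. Using Lemma~\ref{y} to force $v_{P_j}(z_i)$ as large as needed for $j \neq i$, these corrections combine cleanly: set $x_1 := x + \sum_{i=1}^n z_i^2$, so $x_1 \equiv x \pmod{F^2}$, $x_1$ is tame at every $P_i$, and the only new candidate bad places are the simple poles introduced by the $z_i$'s, where pseudo-tameness is automatic by Lemma~\ref{lem:aa}(i).

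Next, the remaining gap is precisely that ``tame at $P_i$'' is a condition on $x_1$ modulo $F^2$, whereas ``pseudo-tame at $P_i$'' is a condition modulo $F^4$; the residual obstruction at each $P_i$ is a class $b_i \in F/F^2$ supported locally at $P_i$. I would use the approximation statements in Corollary~\ref{rem:sat} and Lemma~\ref{lem:rrt} to exhibit a single global $a \in F$ whose class modulo $F^2$ realises the prescribed $b_i$ at each $P_i$ and is regular elsewhere, and then invoke Proposition~\ref{pro:g} to extract $y \in \mathcal{H}$ with $a(x_1, y) \equiv a \pmod{F^2}$. Lemma~\ref{lem:a}(ii) then transfers the pseudo-tameness of $x_1$ at places outside $S$ to $y$, while the local obstruction at each $P_i$ is cancelled by the choice of $a$, so $y$ is pseudo-tame at every place of $F$.

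The main obstacle is the third step: bridging the passage from tameness modulo $F^2$ to pseudo-tameness modulo $F^4$ is a genuinely cohomological refinement, and it relies essentially on Tsen's Theorem through Proposition~\ref{pro:g}. Verifying that the resulting $y$ is simultaneously pseudo-tame at every place of $\mathbb{P}_F$, rather than only at the originally bad places $P_1, \ldots, P_n$, requires careful use of the cocycle identity from Lemma~\ref{lem:a}(i) so that the local corrections glue to a globally consistent class, and also of the control on new poles of $y$ provided by Lemma~\ref{y}; this compatibility check is the subtle part of the argument.
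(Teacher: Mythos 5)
There is a genuine gap, and it occurs at exactly the point your sketch glosses over. After setting $x_1 := x + \sum_i z_i^2$, you claim that the only new candidate bad places are the simple poles of the $z_i$'s, ``where pseudo-tameness is automatic by Lemma~\ref{lem:aa}(i).'' This is false. If $z_i$ has a simple pole at a place $R$ at which $x$ is regular, then $x_1$ has a pole of order exactly $2$ at $R$, while $dx_1 = dx$ (since $d(z_i^2)=0$ in characteristic $2$) satisfies $v_R(dx_1)\ge 0$ there. So the Laurent expansion of $x_1$ at $R$ has a non-vanishing term of degree $-2 < v_R(dx_1)+1$, and $-2$ is not a multiple of $4$; by Lemma~\ref{lem:aa}(i) the element $x_1$ is \emph{not} pseudo-tame at $R$, and no fourth power can repair a degree $-2$ term. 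This is precisely why the paper does not merge the correction into a single element: it keeps two elements $x_1$ and $x_2 = x_1+z^2$, each pseudo-tame only on its own part of $\mathbb{P}_F$ (namely $U_1=\mathbb{P}_F\setminus\{P_1,\dots,P_t\}$ and $U_2=\{P_1,\dots,P_t\}$), and the whole difficulty is to glue them.

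Your endgame also does not follow from the stated lemmas. Lemma~\ref{lem:a}(ii) transfers pseudo-tameness between $x$ and $y$ at a place $P$ only under the hypothesis that one of them is already pseudo-tame at $P$; at the places where $x_1$ fails to be pseudo-tame, no choice of the class $a$ lets you conclude from $a(x_1,y)\equiv a$ that $y$ is pseudo-tame there --- the ``local obstruction $b_i$ cancelled by $a$'' step would require a strengthening of Lemma~\ref{lem:a}(ii) that you have not proved. The paper's resolution is a two-chart \v{C}ech argument: it writes $a(x_1,x_2)=a_1+a_2$ with $a_i$ regular on $U_i$ (a Mittag--Leffler decomposition carried out explicitly via Lemma~\ref{y} and an auxiliary $u=1/y$), solves $a(x_i,y_i)\equiv a_i$ by Tsen's Theorem (Proposition~\ref{pro:g}) for \emph{two} elements $y_1,y_2$, and then uses the cocycle identity \eqref{eq:cocycle} to get $a(y_1,y_2)\equiv 0 \bmod F^2$, which is regular everywhere; Lemma~\ref{lem:a}(ii) then propagates pseudo-tameness of $y_1$ from $x_1$ on $U_1$ and from $y_2$ (which inherits it from $x_2$) on $U_2$. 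Your single-element, single-$y$ route cannot reproduce this, so the proof as proposed does not close.
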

	
	\begin{proof}
		We first show the existence of $x_i,a_i\in F$ for $i=1,2$ such that $x_i$ is pseudo-tame, $a_i$ is regular at $P$ for all $P\in U_i$ with $\mathbb{P}_F =U_1\cup U_2$ and $a(x_1,x_2)\equiv  a_1+a_2 \mod F^2$. 
		
		Let $x_1\in F$ such that $(x_1)_{\infty}=(2n+1)Q$ for sufficiently large $n$ and $Q\in \mathbb{P}_F$. Moreover, we can suppose that $x_1$ has simple zeros. Otherwise, we can replace $x_1$ by $x_1+\alpha $ for some $\alpha \in \mathbb{F}$. Suppose $Q,P_1, \ldots, P_t $  are ramified places of $F$ in $F/\mathbb{F}(x_1)$. Let $z\in F$ such that
		\begin{itemize}
			\item $v_Q(z)\geq 0$,
			\item $z$ has simple poles such that $\mathrm{supp}((x_1)_0)\cap \mathrm{supp}((z)_{\infty})=\emptyset $, and
			\item $x_2=x_1+z^2$ is tame at $P_1, \ldots, P_t$.
		\end{itemize}
		Such an element exists by Lemma \ref{z}. We set $U_1=\mathbb{P}_F\setminus \lbrace P_1, \ldots, P_t \rbrace$ and $U_2=\lbrace P_1, \ldots, P_t \rbrace$.
		Observe that 
		\begin{align*}
		a=a(x_1,x_2)= \left( \frac{dz}{dx_1} \right)^2x_1 \ .
		\end{align*}
		As $v_Q(z)\geq 0$, we have $v_Q(a)\geq 0$. Also, it is easy to observe that $v_P(a)\geq 0$ for any $P\in \mathbb{P}_F\setminus \lbrace P_1, \ldots, P_t \rbrace \cup \mathrm{supp}((z)_{\infty})$ since $dx_1$ has  zeros  only at $P_i$ for $i=1, \ldots,t$ and $dz$ has only poles in $\mathrm{supp}((z)_{\infty})$. Say $\mathrm{supp}((z)_{\infty})=R_1+\cdots+R_k$, where the $R_i$'s are pairwise distinct places of $F$. As $k$ is sufficiently large, by Lemma \ref{y}, there exists $y\in \mathcal{L}(R_1+\cdots+R_k)$ such that
		\begin{itemize}
			\item $y$ has  zero at $Q$ of sufficiently large order,
			\item $v_{R_i}(y)=-1$ for all $i=1, \ldots,k$,
			\item $y$ has no zero at $P_1, \ldots, P_t$.
		\end{itemize}
		Set $u=\frac{1}{y}$, then we can write 
		\begin{align*}
		\frac{dz}{dx}=\alpha_{-2}\frac{1}{u^2}+\alpha_{-1}\frac{1}{u}+\alpha_0+\cdots
		\end{align*}  
		Note that the Laurent series expansion of $\frac{dx}{du}$ and $\frac{dz}{du}$ with respect to $u$ has only even powers of $u$, and hence we have $\alpha_{-1}=0$. Then
		
		\[ v_{R_i}\bigg (\frac{dz}{dx}+\alpha_{-2}\frac{1}{u^2}\bigg)\geq 0 \quad \textrm{ for all }  i=1,\ldots, k  \] 
		and $\frac{dz}{dx}+\alpha_{-2}\frac{1}{u^2}$ has zero at $Q$ of sufficiently large order. Set
		
		\[a_1=\bigg(\frac{dx}{dz}+\alpha_{-2}\frac{1}{u^2}\bigg)^2x \; \textrm{ and } \; a_2=\frac{\alpha_{-2}^2x}{u^4}\]
		so that $a=a_1+a_2$. Then $a_1$ is regular for all $P\in \mathbb{P}_F\setminus \lbrace P_1, \ldots, P_t \rbrace $. Furthermore, since $u$ has no pole at $P_i$, $a_2$ is regular at $P_i$ for all $i=1, \ldots,t$. 
		
		The rest of the proof is similar to the one given in \cite[Theorem 3.6]{sy}, but we give it here for the completeness. By Proposition \ref{pro:g}, for any $a_i$ there exists $y_i\in F$ such that $a(x_i,y_i)\equiv a_i \mod F^2$ for $i=1,2$. Then Equation \eqref{eq:cocycle} implies that $a(y_1,y_2)\equiv 0 \mod F^2$, i.e., $a(y_1,y_2)$ is regular at $P$ for all $P\in U_j$. By Lemma \ref{lem:a}/(ii), this shows that $y_i$ is pseudo-tame at $P$ for all $P\in U_j$ and $j=1,2$, $i=1,2$. In other words, $y_i$ is pseudo-tame at $P$ for all $P\in \mathbb{P}_F $ for $i=1,2$. 
	\end{proof}
	
	We fix a place $Q\in \mathbb{P}_F$, and set $R=\bigcup\limits_{{n\in \mathbb{N}}} \mathcal  L (nQ)$, i.e., $R$ is the
	the subring of $F$ consisting of all elements which have poles only at $Q$.

	\begin{lemma}\label{lem:P}
		Let $x\in R$. If $x$ is pseudo-tame at $Q$ with $-v_Q(dx)\geq 8g$, then there exists $z\in R$ such that $-v_Q(x+z^4)=-v_Q(dx)-1$.
	\end{lemma}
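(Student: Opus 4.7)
The plan is to work with the Laurent expansion of $x$ at $Q$. Choose a separating local parameter $u$ at $Q$, so that $v_Q(du)=0$, and write $x=\sum_{i\ge -N} a_i u^{i}$. In characteristic two $dx/du=\sum_{i\text{ odd}} a_i u^{i-1}$, so $D:=v_Q(dx)$ is automatically even and $D+1=\min\{\,i\text{ odd}:a_i\neq 0\,\}$, which in particular forces $a_{D+1}\neq 0$. Lemma~\ref{lem:aa}(i) then converts the pseudo-tame assumption at $Q$ into the concrete statement that $a_i=0$ whenever $i\le D$ and $4\nmid i$.

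Since $\mathbb{F}=\bar{\mathbb{F}}_2$ is perfect, each $a_{4j}$ has a unique fourth root in $\mathbb{F}$. Setting $K:=\lfloor D/4\rfloor+1$, I define the Laurent polynomial
\[
y:=\sum_{j\le K-1} a_{4j}^{1/4}\,u^{j}\in\mathbb{F}((u)).
\]
A direct calculation in characteristic two, using the pseudo-tame vanishing from the previous paragraph, gives $y^{4}=\sum_{i\le D} a_i u^{i}$ as Laurent polynomials in $u$, so $v_Q(x+y^{4})=D+1$ (the leading surviving term is $a_{D+1}u^{D+1}$). The proof therefore reduces to producing $z\in R$ with $v_Q(z-y)\ge K$: indeed, $v_Q(z^{4}+y^{4})=4v_Q(z-y)\ge 4K$, the parity of $D$ forces $4K\ge D+2$, and the strict triangle inequality applied to $x+z^{4}=(x+y^{4})+(z^{4}+y^{4})$ then yields $v_Q(x+z^{4})=D+1$, as required.

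Finding such $z\in R$ is where the hypothesis $-v_Q(dx)\ge 8g$ is essential, and it constitutes the main technical content of the lemma. This inequality implies $K\le -2g+1$, so every exponent $j<K$ satisfies $-j\ge 2g$; by the Weierstrass gap theorem every integer $\ge 2g$ is a non-gap at $Q$, so for each such $j$ there exists $f_j\in R$ with $v_Q(f_j)=j$, and by scaling by an element of $\mathbb{F}$ its leading Laurent coefficient may be prescribed. Starting from $z_0:=0$, I would iteratively set $z_{i+1}:=z_i+f_{j_i}$, where $j_i:=v_Q(y-z_i)$ and $f_{j_i}$ is scaled to cancel the leading Laurent coefficient of $y-z_i$; this strictly increases $v_Q(y-z_i)$ at each step and terminates in finitely many steps once $v_Q(y-z_i)\ge K$, producing the desired $z$. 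The main obstacle is precisely this globalization step: the local Laurent target $y$ is dictated by $x$, but realizing its polar behavior inside $R$ demands that every required pole order lie in the Weierstrass semigroup at $Q$, which is exactly what the quantitative hypothesis $-v_Q(dx)\ge 8g$ guarantees.
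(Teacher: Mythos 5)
Your proof is correct and follows essentially the same strategy as the paper's: both arguments are successive leading-coefficient cancellations at $Q$, using pseudo-tameness to see that every pole order to be removed is divisible by $4$, and the hypothesis $-v_Q(dx)\ge 8g$ together with Riemann--Roch to realize each required pole order (all of which are $\ge 2g$) by an element of $R$. The only cosmetic difference is that you run the iteration on the formal fourth root $y$ of the principal part of $x$ and then approximate it inside $R$, whereas the paper iteratively replaces $x$ by $x+\alpha z_0^4$ with $(z_0)_\infty=kQ$, which amounts to the same computation.
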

	
	\begin{proof}
		We set $2e=-v_Q(dx)$. Note that $-v_Q(x)\geq -v_Q(dx)-1$, and the equality holds only if $x$ is tame at $Q$. Suppose 
		that $-v_Q(x)> -v_Q(dx)-1\geq 8g -1$. Since $x$ is pseudo-tame at $Q$, $v_Q(x)=-4k$ for some integer $k \geq 2g$. By Lemma \ref{lem:rrt}, there exists $z_0\in F$ with $(z_0)_{\infty}=kQ$. Since $\mathbb{F}$ is algebraically closed, there exists $\alpha$ such that for $\tilde{x}=x+\alpha z_0^4$ we have $-v_Q(\tilde{x})<4k=-v_Q(x)$. Then the existence of $z$ follows after finitely many steps. %Hence we can assume $v_P(f)=v_P(df)-1$.
		
	\end{proof}

	\begin{lemma}\label{lem:D}
		Let $D=\sum n_i P_i$, $n_i\geq 0$, be a divisor of degree $d$. Suppose that $Q\not\in \mathrm{supp}(D)$ and $d>2g$. Then for $a\in R$ there exists $x\in R$ such that $D\leq (x+a)_0$ and $(x)_{\infty}=nQ$ for some $n< d+2g$. 
	\end{lemma}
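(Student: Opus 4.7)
The plan is to realize the desired $x$ as the solution of a simultaneous congruence problem inside a Riemann--Roch space based at $Q$, with solvability secured by a single dimension count.

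First I would introduce the $\mathbb{F}$-linear evaluation map
\begin{equation*}
\phi\colon\mathcal{L}((d+2g-1)Q)\longrightarrow\bigoplus_{i}\mathcal{O}_{P_i}/P_i^{n_i},\qquad z\longmapsto \bigl(z\bmod P_i^{n_i}\bigr)_i.
\end{equation*}
This is well-defined because every element of $\mathcal{L}((d+2g-1)Q)$ is regular at each $P_i$ (since $Q\notin\mathrm{supp}(D)$ forces $P_i\neq Q$). As $a\in R$ is likewise regular at every $P_i$, the tuple $(-a\bmod P_i^{n_i})_i$ lies in the codomain, so any preimage of it under $\phi$, call it $x$, will satisfy $v_{P_i}(x+a)\geq n_i$ for every $i$, i.e., $D\leq(x+a)_0$.

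The heart of the argument is to show that $\phi$ is surjective, which I would do by Riemann--Roch. The kernel is precisely $\mathcal{L}((d+2g-1)Q-D)$, a divisor of degree $2g-1$, so its $\mathbb{F}$-dimension equals $g$. The domain $\mathcal{L}((d+2g-1)Q)$ has dimension $d+g$ (its degree $d+2g-1$ exceeds $2g-2$, using $d>2g$), while the codomain has $\mathbb{F}$-dimension $\sum_i n_i\deg P_i=\deg D=d$. Hence the image has dimension $(d+g)-g=d$, matching the codomain, so $\phi$ is onto.

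Picking any $x$ in the preimage of $(-a\bmod P_i^{n_i})_i$ then closes the argument: $x\in\mathcal{L}((d+2g-1)Q)\subseteq R$ forces $(x)_\infty=nQ$ with $n\leq d+2g-1<d+2g$, and the congruences at the $P_i$ give $D\leq(x+a)_0$ as noted. I do not foresee any real obstacle beyond identifying this evaluation map; the whole proof reduces to the standard Riemann--Roch dimension count, which goes through comfortably under the hypothesis $d>2g$.
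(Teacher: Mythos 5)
Your argument is correct, and it takes a genuinely different route from the paper's. The paper first invokes the Strong Approximation Theorem (Corollary~2) to produce some $x\in R$ with $D\leq(x+a)_0$ and $(x)_\infty=nQ$, with no control on $n$, and then runs an inductive pole-reduction: whenever $n\geq d+2g$, Lemma~4 supplies a $z$ with $D\leq(z)_0$ and $(z)_\infty=nQ$, and a suitable scalar multiple of $z$ cancels the leading pole term of $x$ at $Q$ without disturbing the congruences at the $P_i$. Your proof replaces this two-stage reduction with a single surjectivity statement: the evaluation map $\mathcal{L}((d+2g-1)Q)\to\bigoplus_i\mathcal{O}_{P_i}/P_i^{n_i}$ is onto because its kernel $\mathcal{L}((d+2g-1)Q-D)$ has dimension exactly $g$ while the domain has dimension $d+g$ and the target has dimension $d$; all three counts are right (the kernel divisor has degree $2g-1$, so the index term in Riemann--Roch vanishes). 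This is cleaner, avoids the induction entirely, and in fact never uses the hypothesis $d>2g$. One cosmetic point: your construction only guarantees $v_{P_i}(x+a)\geq n_i$, so in the degenerate situation where $-a$ itself lies in $\mathcal{L}((d+2g-1)Q)$ you could end up with $x=-a$ and $x+a=0$, for which $(x+a)_0$ is undefined; the paper's use of Strong Approximation prescribes the valuations $v_{P_i}(x+a)=n_i$ exactly and so excludes this automatically. You can dispatch it by adding a nonzero element of the ($g$-dimensional) kernel when $g\geq 1$, or by noting the case is irrelevant to how the lemma is applied; it is worth a sentence but is not a gap in the substance of the argument.
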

	
	\begin{proof} 
		By the Strong Approximation Theorem, there exists $x\in R$ such that $D\leq (x+a)_0$ and $(x)_\infty=nQ$, see Corollary \ref{rem:sat}.
		If $n\geq d+2g$, then there exists $z\in F$ such that $D\leq (z)_0$ and $(z)_{\infty}=nQ$ by Lemma \ref{lem:rrt}. There exists $\alpha\in \mathbb{F}$ such that $(x+\alpha z)_{\infty}=kQ$ with $k<n$. Note that for $\tilde{x}=x+\alpha z \in R$ we have $D\leq (\tilde{x}+a)_0$. Then the argument follows by induction.
		
	\end{proof}
	
	%\begin{lemma}
	%Let $f_0\in F$ be a pseudo-tame element. Then there exists $\gamma \in \Gamma$ such that $f=\gamma(f_0)\in R$ with $v_P(f)>8g$, $v_P(f)\equiv 1 \mod 2$, and $f$ has only simple zeros.
	%\end{lemma}
	
	%\begin{proof} 
	% We write $f_0=h_0/h_1$ for some $h_0,h_1 \in R$ and set $f_1=f_0h_1^4=h_1^3h_0$. Note that $f_1$ is pseudo-tame by Lemma \ref{lem:a}/(ii). If $v_P(f_0)\leq 8g$, then replace $f_0$ by $h_2^4f_0$ for some proper $h_2\in R$ such that $v_P(h_2^4f)>8g$. Moreover, we can assume that $f$ has simple zeros; otherwise replace $f$ by $f+\alpha$ for some proper $\alpha \in \mathbb{F}$.
	%\end{proof}
	
	\begin{proposition}\label{pro=2}
		Let $F$ be a function field over $\mathbb{F}=\bar{\mathbb{F}}_2$. Then there exists $x\in F$ such that $F/\mathbb{F}(x)$ is tame.
	\end{proposition}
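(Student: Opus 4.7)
The plan is to convert the pseudo-tame element from Proposition \ref{pro:pt} into a tame one by dealing with wild ramification at finitely many places via the Strong Approximation Theorem, concentrating the remaining wildness at a single place $Q$, and finally applying Lemma \ref{lem:P} at $Q$. Fix a pseudo-tame $\tilde{x} \in F$ via Proposition \ref{pro:pt}, and let $S \subset \mathbb{P}_F$ be the finite set of places at which $\tilde{x}$ is not unramified-tame (i.e., wild at $P$ or $v_P(d\tilde{x}) \neq 0$). Choose a place $Q$ with $-v_Q(d\tilde{x}) \geq 8g$; this can be arranged either by selecting the initial element in the construction of Proposition \ref{pro:pt} to have sufficiently wild behavior at $Q$, or by composing $\tilde{x}$ with a suitable element of $\Gamma$ as in Lemma \ref{lem:aa}(ii).

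At each $P \in S \setminus \{Q\}$, pseudo-tameness provides a local $z_P \in F$ with $\tilde{x} + z_P^4$ tame at $P$; at tame-but-ramified $P \in S$ we take $z_P = 0$. Using Lemma \ref{sat} together with Lemma \ref{lem:D}, construct $z \in R$ (poles only at $Q$) satisfying $v_P(z - z_P) \geq N_P$ at each $P \in S \setminus \{Q\}$ for sufficiently large $N_P$, while also absorbing into $Q$ any remaining poles of $\tilde{x}$ outside $Q$ via fourth-power cancellations (possible because wild-pole orders of $\tilde{x}$ are multiples of $4$ by pseudo-tameness). Set $x_0 := \tilde{x} + z^4 \in R$. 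Then $x_0$ is tame at each $P \in S \setminus \{Q\}$ by the local approximation; tame at each $P \notin S \cup \{Q\}$ because $\tilde{x}$ there has smallest non-constant Laurent degree equal to $1$ and adding a regular fourth power preserves this; and pseudo-tame at $Q$ by Lemma \ref{lem:aa}(ii), with $-v_Q(dx_0) = -v_Q(d\tilde{x}) \geq 8g$ since $d(z^4) = 0$ in characteristic $2$.

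Finally, Lemma \ref{lem:P} applied to $x_0$ yields $z' \in R$ with $-v_Q(x_0 + z'^4) = -v_Q(dx_0) - 1$; this pole order is odd because pseudo-tameness forces $v_Q(dx_0)$ to be even (it equals one less than the smallest odd-degree nonvanishing Laurent coefficient), so $x := x_0 + z'^4$ is tame at $Q$, and tame at every other place because $z'$ is regular away from $Q$. Hence $F/\mathbb{F}(x)$ is a tame extension.

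The main obstacle is the coordinated construction of $z$ in the second paragraph: a single element must simultaneously cancel the wild ramification at each $P \in S \setminus \{Q\}$, move the remaining poles of $\tilde{x}$ onto $Q$, avoid creating new wild ramification at the tame-but-highly-ramified places of $S$ (where an incautious $z^4$ with $v_P(z - z(P)) = 1$ could lower the smallest non-constant Laurent degree of $x_0$ to $4$), and preserve the $8g$ lower bound on $-v_Q(d\tilde{x})$ at $Q$. The finiteness of the ramification divisor of $\tilde{x}$ together with the explicit Laurent-coefficient description of pseudo-tameness in Lemma \ref{lem:aa}(i) and the full strength of the Strong Approximation Theorem make this delicate bookkeeping feasible.
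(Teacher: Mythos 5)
Your overall strategy (pseudo-tame element from Proposition \ref{pro:pt}, then corrections by fourth powers, with Lemma \ref{lem:P} handling the place $Q$) is the right one, but the order in which you perform the two corrections creates a genuine gap in the final step. You first arrange $x_0=\tilde x+z^4$ to be tame at every $P\neq Q$ and only then apply Lemma \ref{lem:P} at $Q$; but the element $z'\in R$ produced by Lemma \ref{lem:P} is controlled only at $Q$ (all the lemma guarantees elsewhere is regularity), and adding $z'^4$ can destroy tameness away from $Q$: at a place $P$ where $x_0$ is tamely ramified with odd index $e\geq 5$, one has $x_0+z'^4-(x_0+z'^4)(P)=(x_0-x_0(P))+(z'-z'(P))^4$, and if $v_P(z'-z'(P))=k$ with $4k<e$ the new ramification index drops to $4k$, which is wild. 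This is exactly the hazard you flag for $z$ in your second paragraph, but you do not address it for $z'$, and it is not easily patched: in the proof of Lemma \ref{lem:P} the auxiliary elements are forced to have pole divisor exactly $kQ$ with $k=-v_Q(x)/4$, leaving no room to impose high-order vanishing at the finitely many bad places. The paper resolves this by reversing the order: it applies Lemma \ref{lem:P} \emph{first} to get $y\in R$ tame at $Q$ with $-v_Q(y)=2e-1$, then passes to $y^3$ (still pseudo-tame, with odd pole order $3(2e-1)$ at $Q$) and corrects at the zeros of $dy$ by a $z^4$ whose pole order at $Q$ is bounded via Lemma \ref{lem:D} by $4\bigl(2g+\tfrac{e+g-1}{2}\bigr)<3(2e-1)$, so the Strict Triangle Inequality keeps $v_Q(x)$ odd; the cubing is precisely what makes this degree comparison work.

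A secondary problem: Lemma \ref{lem:P} requires $x_0\in R$, and your route into $R$ --- ``absorbing the poles of $\tilde x$ outside $Q$ via fourth-power cancellations'' --- fails at odd-order (tame) poles of $\tilde x$, since adding $z^4$ can never cancel a pole of odd order. The paper instead writes $\tilde x=z_0/z_1$ with $z_0,z_1\in R$ and replaces $\tilde x$ by $z_1^4\tilde x=z_1^3z_0\in R$, using that multiplication by a fourth power preserves pseudo-tameness (Lemma \ref{lem:aa}(ii)); the same device (multiplying by $w^4$ for suitable $w\in R$) is also how one arranges $-v_Q(d\tilde x)\geq 8g$, rather than by ``choosing'' such a place $Q$.
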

	\begin{proof}
		Let $x_0$ be a pseudo-tame element. As $F$ is the quotient field of $R$, we can  write $x_0=z_0/z_1$ for some $z_0,z_1 \in R$.  Set $y=x_0z_1^4=z_1^3z_0$. Note that $y\in R$ is pseudo-tame by Lemma \ref{lem:aa}/(ii). We can assume that $-v_Q(dy)\geq 8g$; otherwise we can replace $y$ by $z^4y$ for some proper $z\in R$. By Lemma \ref{lem:P}, we can assume that $-v_Q(dy)=-v_Q(y)-1=2e$.  Moreover, we can suppose that $y$ has simple zeros; otherwise replace $y$ by $y+\alpha$ for some proper $\alpha \in \mathbb{F}$. In other words, there exists a pseudo-tame element $y\in R$, which is tame at $Q$ and having simple zeros.

		Let $\mathcal{Z}$ be the set of zeros of $dy$. Observe that $y$ is pseudo-tame implies that $y^3$ is pseudo-tame. As $dy$ has finitely many zeros, there exists $z\in F$ such that $y^3+z^4$ is tame at $P$ for all $P\in \mathcal{Z}$. Moreover, by the Strong Approximation Theorem, we can assume that $z\in R$, i.e., we can assume that $y^3+z^4$ is a pseudo-tame element in $R$ which is tame at $P$ for all $P\in \mathcal{Z}$. We set $v_P(dy)=2m_P$, and define 
		\[ D=\sum_{P\in \mathcal{Z}} \bigg\lfloor \frac{m_P}{2}\bigg\rfloor Q \ .\] 
		As $\mathrm{deg}(dy)=2g-2$, we have 
		\[\sum_{P\in \mathcal{Z}} m_P=e+g-1, \textrm{ i.e., } \mathrm{deg}(D)\leq \frac{e+g-1}{2} \ . \]
		By Lemma \ref{lem:D}, we can also assume that $z\in R$ such that 
		\[(z)_0\geq D \textrm{ and } \mathrm{deg}(z)_{0}=\mathrm{deg}(z)_{\infty} \leq 2g+\frac{e+g-1}{2} \ .\] 
		We set $x=y^3+h^4$. Note that by construction $x\in R$ is pseudo-tame and tame at $P$ for all $P\in \mathcal{Z}$. Moreover, the Strict Triangle Inequality implies that 
		\[v_Q(x)=3v_P(Q)=-3(2e-1), \textrm{ i.e., $x$ is tame at }  Q.\] For $P\in \mathbb{P}_F \setminus \mathcal{Z}\cup\lbrace Q \rbrace $, we see that $v_P(dx)=2v_Q(y)=0$ or $2$ as $y$ has only simple zeros. Note that $x$ is unramified at $P$ if and only if $v_P(dx)=0$. Since $x$ is a pseudo-tame rational function, any term in the Laurent series expansion smaller than $v_P(dx)$ is multiple of $4$ by Lemma \ref{lem:aa}/(i). However, this implies that $v_P(dx)=0$, i.e., $x$ is tame at $P$.
		
	\end{proof}
	
	%We end this subsection with the following natural question.
	%\begin{q}
	%Is it possible to construct a pseudo-tame element?
	%\end{q}

	%We here give  a more elementary  proof for the Tame  $p$-Belyi Theorem, when $p$ is odd. For the sake of completeness we restate the theorem as follows:

	% \begin{theorem} \label{belyi_odd}
	%	Let $F$ be a function field over $\bar{\mathbb{F}}_p$. Then there exists a rational subfield $\bar{\mathbb F}_p(y)$ of $F$ such that $F/\bar{\mathbb F}_p(y)$ is a tame extension and there exist at most three ramified places of $\bar{\mathbb F}_p(y)$ in $F/\bar{\mathbb F}_p(y)$. Furthermore, they all lie in the set $\{(y=0), (y=1), (y=\infty)\}$.
	% \end{theorem}
	%\begin{proof}
	\noindent \textbf{Proof of Theorem \ref{tame:ff}.} 	We consider the subfield  $\bar{\mathbb F}_p(x)$ of $F$ given as in Propositions  \ref{fulton} and \ref{pro=2}, i.e.,  $F/\bar{\mathbb F}_p(x)$ is tame. Since $F/\bar{\mathbb F}_p(x)$ is a finite separable extension, there exist finitely many ramified places of $\bar{\mathbb F}_p(x)$ in $F/\bar{\mathbb F}_p(x)$. Suppose that all the ramified places of $\bar{\mathbb F}_p(x)$ are contained in the set $\{(x=0),(x=\infty), P_{1},\cdots,P_{n}\}$ for some $n\geq 1$. Note that any place of $\bar{\mathbb F}_p(x)$ is rational, i.e., $P_i$ is a place corresponding to $x-\alpha_i$ for some nonzero $\alpha_i \in \bar{\mathbb F}_p$. Let $r$ be a positive integer such that $\alpha_i^{q^r-1}-1=0$ for all $i=1,\ldots,n$. Then Lemma \ref{main} also holds for the extension $\bar{\mathbb F}_p(x)/\bar{\mathbb F}_p(t)$ defined by $t=1-x^{q^r-1}$. In other words, all places $P_1,\ldots , P_n$ lie over $(t=0)$. Moreover, $(x=0),(x=\infty)$ are the only ramified places in $\bar{\mathbb F}_p(x)/\bar{\mathbb F}_p(t)$, which are  totally ramified lying over $(t=1),(t=\infty)$, respectively. Then the proof follows from the fact that $\bar{\mathbb F}_p(x)/\bar{\mathbb F}_p(t)$ is tame.
	%\end{proof}
	\hfill$\Box$\\[.5em]
	
	We note that the statement of the  Tame $p$-Belyi Theorem strictly holds if the genus of $F$ is positive. More precisely, we will see in Remark \ref{rem:tame} that there must be at least three (resp., two) ramified places in Theorem \ref{tame:ff} if $g(F)>0$ (resp., $g(F)=0$). That is, the places $(y=0), (y=1)$, and $(y=\infty)$ are all ramified in the Tame $p$-Belyi Theorem when $g(F)$ is positive.
	
	\begin{remark}\label{rem:tame} Let $F$ be a function field over $\bar{\mathbb F}_p$. Suppose that there exists a rational subfield $\bar{\mathbb F}_p(y)$ of $F$ such that
		$F/\bar{\mathbb F}_p(y)$ is tame of degree $n$. Let $Q_1, \ldots , Q_k$ be all ramified places of $\bar{\mathbb F}_p(y)$ in
		$F/\bar{\mathbb F}_p(y)$. We denote by $N_{Q_i}$ the number of places of $F$ lying over $Q_i$ for $i=1,\ldots, k$. Then by Dedekind's Different Theorem the degree of the ramification divisor of $F/\bar{\mathbb F}_p(y)$ is given as follows.
		\begin{eqnarray}\label{eq:ddt}
		\mathrm{deg}\left(\mathrm{Diff}(F/\bar{\mathbb F}_p(y))\right)
		\nonumber 	&=&\sum_{i=1}^{k} \sum_{P\in\mathbb{P}_{F},P|Q_{i}}{(e(P|Q_{i})-1)} \\
		\nonumber 	&=&\sum_{i=1}^{k} \sum_{P\in\mathbb{P}_{F},P|Q_{i}}{e(P|Q_{i})} -\sum_{i=1}^{k}N_{Q_i}\\
		&=&kn-\sum_{i=1}^{k}N_{Q_i}
		\end{eqnarray}
		Note that we use the Fundamental Equality in the last equality. By the Hurwitz genus formula, we also have 
		\begin{align}\label{eq:hgf}
		\mathrm{deg}\left(\mathrm{Diff}(F/\bar{\mathbb F}_p(y))\right)=2n+2g(F)-2 \ .
		\end{align}
		Equation \eqref{eq:ddt} and \eqref{eq:hgf} implies that $k\geq 2$. The case $k=2$ holds only if $g(F)=0$ and the places $Q_1 , Q_2$ are totally ramified. 
	\end{remark}
	
	\begin{remark}\label{rem:wild} Since ramification does not change under the constant field extension, we conclude from Remark \ref{rem:tame} that there must be a wild ramification in Theorem \ref{wild:ff} as noticed in Remark \ref{rem.wild}. Hence, it is called the Wild $p$-Belyi Theorem.
	\end{remark}

	\section*{Acknowledgment}

	The authors would like to thank Prof. Dr. Henning Stichtenoth and Prof. Dr. Jaap Top for their kind help and helpful discussions, which improve the manuscript considerably.
	Nurdag\"{u}l Anbar is supported by the Austrian Science Fund (FWF): Project F5505--N26 and Project F5511--N26, which is a part of the Special Research Program ``Quasi-Monte Carlo Methods: Theory and Applications".

\section*{References}

\end{document}